\theoremstyle{plain}
\newtheorem{theorem}{Theorem}
\newtheorem{corollary}[theorem]{Corollary}
\newtheorem{proposition}[theorem]{Proposition}
\newtheorem{claim}[theorem]{Claim}
\theoremstyle{definition}
\newtheorem{definition}[theorem]{Definition}
\newtheorem{remark}[theorem]{Remark}
\newtheorem{example}[theorem]{Example}
\newtheorem{algorithm}[theorem]{Algorithm}
\numberwithin{equation}{section}
\newenvironment{Macaulay2}{ \begin{spacing}{0.4} 
\smallskip } { \smallskip 
\end{spacing} }
\newcommand{\B}[1]{\mathbb #1}
\newcommand{\F}[1]{\mathfrak #1}
\newcommand{\BF}[1]{\mathbf #1}
\DeclareMathOperator{\lcm}{lcm}
\DeclareMathOperator{\NF}{NF}
\newcommand{\ring}{R_0}
\newcommand{\ringh}{\tilde{R}_0}
\newcommand{\dual}{D_0}
\newcommand{\sdual}{S_0}
\newcommand{\dualh}{\tilde{D}_0}
\newcommand{\init}{\operatorname{in}_>}
\newcommand{\Ih}{\tilde{I}}
\newcommand{\LT}{\operatorname{in}_>}
\newcommand{\LTg}{\operatorname{in}_\succ}
\newcommand{\D}{{\bf d}}
\newcommand{\ideal}[1]{\langle #1 \rangle}
\newcommand{\deh}[1]{\psi( #1 )}
\newcommand{\dehw}[1]{\psi( #1 )}
\author{Robert Krone}
\title{Numerical Algorithms for Dual Bases of Positive-Dimensional Ideals}
\begin{document}
\maketitle

\begin{abstract}
An ideal of a local polynomial ring can be described by calculating a standard basis with respect to a local monomial ordering.  However the usual standard basis algorithms are not numerically stable.  A numerically stable approach to describing the ideal is by finding the space of dual functionals that annihilate it, which reduces the problem to one of linear algebra.  There are several known algorithms for finding the truncated dual up to any specified degree, which is useful for describing zero-dimensional ideals.  We present a stopping criterion for positive-dimensional cases based on homogenization that guarantees all generators of the initial monomial ideal are found.  This has applications for calculating Hilbert functions.
\end{abstract}



A Gr\"obner basis for a polynomial ideal provides a wealth of computational information, for example the dimension of the ideal, its Hilbert function, a way to answer the ideal membership question, and more.  Computing a Gr\"obner basis is a well understood problem at least in the setting of exact computation, for example using the Buchberger algorithm.  Roughly the same can be said about ideals in a local polynomial ring.  In the exact setting we can compute a {\em standard basis} (the local equivalent of a Gr\"obner basis) using variations of Buchberger's algorithm such as those using Mora's Normal Form algorithm.  A treatment of ideals in local rings and standard basis algorithms can be found in \cite{Cox-Little-OShea:using} and \cite{Singular-book-02}.

However in many practical situations, using only exact computations becomes infeasible and we are forced to rely on approximate numerical data.  For example many large systems of polynomials can only be solved in practice with numerical algorithms such as homotopy continuation.  We may want to investigate the properties the ideal in the local ring at some solution point, but this point is only known to us approximately.  Although we can approximate the point with arbitrarily high precision, the error can never be entirely eliminated.  In this context the usual algorithms for computing a standard basis are unsuitable because they are not numerically stable.  Even arbitrarily small errors in the initial data can produce results that are combinatorially incorrect, for instance incorrect values of the Hilbert function.  Some of the alternative approaches for computing Hilbert functions, such as Janet basis algorithms \cite{DBLP:journals/jsc/Apel98}, must also be ruled out because they lack numerical stability.  
Other approaches useful in the exact setting can be found in \cite{mora2005solving}.

In a numerical setting, to compute the information provided by a Gr\"obner basis we need to tread carefully because many tools are no longer available to us.  One avenue developed by Bates, et al. \cite{Bates-et-al:local-dimension-test} is to find witness points of the various components of the variety in order to compute dimension of the ideal and other information.  Another approach that can be used in the case of zero-dimensional ideals is computing a border basis of the ideal, which has better numerical stability than Gr\"obner basis computations \cite{KreuzerRobbianoBook2}.  In this paper we will focus on an approach which uses purely local information: computing the local {\em dual space} of the ideal.  The dual space is the vector space of all functionals that annihilate every element of the ideal.  This idea was first developed in the seminal work of Macaulay \cite{Macaulay:modular-systems}.  The dual space of an ideal can provide much of the same information as a standard basis, such as the Hilbert 
function of the ideal, and a test for ideal membership.

There are several algorithms for computing the dual space of an ideal in a local ring, truncated at some degree.  One that will be discussed in this paper is the Dayton-Zeng algorithm presented in \cite{DZ-05}, and another is the Mourrain algorithm presented in \cite{Mourrain:inverse-systems}, although both are based on the ideas of Macaulay.  The numerical advantage to dual space algorithms is that they reduce the problem to finding the kernel of a matrix.  This can be done in a numerically stable way using singular value decomposition (SVD).

These truncated dual space algorithms provide a way to fully characterize the local properties of an ideal when the ideal is zero-dimensional, i.e. the point of interest is an isolated solution.  In this case the dual space has finite dimension, so truncating at a high enough dimension we will find a basis for the whole space.  However, when the ideal is not zero-dimensional or when the dimension is not known a priori, this strategy will fall short.

Our contribution is a method of finding the truncated dual space up to sufficient degree to ensure that the important features of the local ideal are found.  In particular, this means finding an explicit formula for the Hilbert function of the ideal at all values, not just the values up to some finite degree.  The method presented can also be used to recover a standard basis for the ideal, which as far as we know is not possible using existing truncated dual space algorithms alone in a numerical setting.  Additionally these tools can be used to answer the ideal membership test for polynomials up to some bounded degree.  In this way we can describe the local properties of an ideal numerically, using purely local information, for an ideal of any dimension.  We have implemented this method in the Macaulay2 computer algebra system \cite{M2www}.  Our code can be found at \url{http://people.math.gatech.edu/~rkrone3/NHcode.html}.

A potential application for this result is for developing numerical algorithms for computing the primary decomposition of an ideal.  Current algorithms for primary decomposition use elimination theory which relies on Gr\"obner bases.  On the numerical side, there are algorithms for decomposing a variety into irreducible components.  As discussed in \cite{DBLP:conf/issac/Leykin08}, this is done by intersecting with random affine spaces of the correct dimension to collect witness points on various components.  Then homotopy methods are used to decide which witness points belong to the same components.  This is part way to a primary decomposition, since the irreducible components correspond to the minimal associated primes.  However the remaining obstacle is numerically detecting embedded components of the ideal.  Given a point of interest on the variety that has been found numerically, knowledge of the Hilbert function may help decide whether or not the point sits in an embedded component.

In Section \ref{prelim} we describe preliminary information, defining a standard basis and the Hilbert function of an ideal in a local ring, as well as algorithms to calculate them.  In Section \ref{duals} we define the dual space of an ideal, and show how the dual space can be used to recover information about the ideal.  We also describe the Dayton-Zeng and Mourrain algorithms here.  In Section \ref{homog} we show how the homogenization of an ideal $I$ motivates an algorithm for finding the truncated dual up to sufficient degree and Sections \ref{dehomog} and \ref{sylvester} contain our main result: an algorithm for finding the Hilbert function and standard basis for an ideal in a numerically stable way using the dual space.

\section{Preliminaries}\label{prelim}
Let $f_1,\ldots,f_s$ be a system of polynomials in ring $R = k[x_1,\ldots,x_n]$ where $k$ is a field, and let $J = \ideal{f_1,\ldots,f_s}$.  In practice we can assume $k = \B C$ because we are interested primarily in numerical applications.  Suppose the point $b \in \B A^n(k)$ is known to be in the zero set of these polynomials, but $b$ has been calculated numerically so it may not lie exactly on the variety.  We would like to characterize the zero set $\BF V(J)$ in a neighborhood of $b$.

The proper context for answering these local questions is in the local ring at $b$.  Let $R_b$ be the localization of $R$ with respect to the maximal ideal $\F m = \langle x_1 - b_1,\ldots,x_n - b_n \rangle$, so
	\[ R_b = \{f/g |\; f,g\in R,\; g(b) \neq 0\}. \]
	Let $I$ be the extension of $J$ in this local ring $I = JR_b$.  Without loss of generality we will take $b = (0,\ldots,0)$.  This makes calculations simpler, and for $b \neq 0$ we can translate elements of $R_b$ to the origin by substituting each $x_i$ with $x_i + b_i$.  Every $f \in \ring$ can be expressed as a power series which converges in some neighborhood of the origin, so
	\[ f = \sum_{\alpha \in \B N^n} c_\alpha x^\alpha \]
	with each $c_\alpha \in k$.  Here $\alpha = (\alpha_1,\ldots,\alpha_n)$ is a multi-index and $x^\alpha$ denotes the monomial $x_1^{\alpha_1}\cdots x_n^{\alpha_n}$.  Let $|\alpha| = \alpha_1 + \cdots + \alpha_n$, i.e. $|\alpha|$ is the degree of $x^\alpha$.

The ring $\ring$ is equipped with a local order $>$.
\begin{definition}
	A local order is a total order on the monomials of a local ring that is compatible with multiplication, and has $1 > x_i$ for all $x_i$ (in contrast to a monomial order where $1 < x_i$).
\end{definition}
Taking the reverse of any monomial order produces a local order and vice versa.  We will take the local order to be anti-graded, meaning that it respects the degree of the monomials, similar to a graded monomial order.  Let $\LT f$ denote the lead term of $f \in \ring$.  Note that even if $f$ is not polynomial, it still has a well defined lead term when considered as a power series.  Let $\LT I$ denote the initial ideal of $I$, $\LT I = \ideal{\LT f|f \in I}$.

In an exact setting questions about $I$ could be answered by finding a standard basis, which is the local equivalent of a Gr\"obner basis.
\begin{definition}
	Given a local order $>$ on $\ring$, a standard basis $G$ of ideal $I$ is a finite set $G = \{g_1,\ldots,g_r\} \subset I$ with $\ideal{\LT g_1,\ldots,\LT g_r} = \LT I$.
\end{definition}
The simplest algorithm for calculating a standard basis is the Buchberger algorithm (as with a Gr\"obner basis).  For any $f,g \in \ring$ define their S-pair as
	\[ S(f,g) = \frac{\lcm(\LT f,\LT g)}{\LT f}f - \frac{\lcm(\LT f,\LT g)}{\LT g}g. \]
Next define the normal form of an element $f\in \ring$ with respect to a finite set of polynomials $G = \{g_1,\ldots,g_r\} \in \ring$ to be a polynomial
	\[ \NF_G(f) = uf - a_1g_1 -\cdots - a_rg_r \]
for some unit $u$ and polynomials $a_i$ such that $\LT f \geq \LT \NF_G(f)$, $\LT f \geq \LT a_ig_i$, and $\LT \NF_G(f)$ is not divisible by any $\LT f_i$.  Such a polynomial always exists and can be calculated explicitly using Mora's tangent cone algorithm \cite{mora2005solving} (this is the local equivalent of the division algorithm).  Buchberger's algorithm proceeds as follows:  Starting with the generators of $I$, $G = \{f_1,\ldots,f_s\}$, calculate $\NF_G(S(f_i,f_j))$ for each pair $i \neq j$.  If any are non-zero, add them to the set $G$ and repeat the process, otherwise $G$ is a standard basis for $I$.

A standard basis $G$ for $I$ provides answers to many of the questions one might have about the local properties of $J$.  For instance, $f \in I$ if and only if $\NF_G(f) = 0$ so a standard basis provides an algorithmic way to answer the ideal membership question.  From a standard basis we can also calculate the Hilbert function of $I$ which determines dimension of the component of $J$ through $b$, and if $b$ is an isolated solution it determines its multiplicity.
\begin{definition}
	The Hilbert function of ideal $I$ with local order $>$ is $H_I:\B N \to \B N$ where $H_I(d)$ counts the number of monomials with degree $d$ that are not in $\LT I$.
\end{definition}

Consider $\LT I$ in the lattice of monomials.  For $\LT G = \{m_1,\ldots,m_r\}$, each monomial $m_i$ cuts out the cone $C_{m_i}$ of all its monomial multiples, and the monomials in $\LT I$ are exactly $\bigcup_i C_{m_i}$.  The resulting picture is called a ``staircase'' (see Figure \ref{fig:stair}) and the Hilbert function counts the monomials outside the staircase at each degree.  Using the inclusion-exclusion principle and noting that $C_{m_i} \cap C_{m_j} = C_{\lcm(m_i,m_j)}$ we get an explicit combinatorial formula for the Hilbert function:
	\[ H_I(d) = \sum_{S \subset \LT G} (-1)^{|S|} \binom{d - \deg \lcm(S) + n - 1}{n-1} \]
where the binomial coefficients $\binom{p}{q}$ are taken to be 0 for $p < q$.  Note that when defined this way, for fixed $q$ the binomial coefficients are polynomial in $p$ for all $p > 0$, and this polynomial has degree $q$.  As a result, it is clear that the Hilbert function is described by a polynomial for sufficiently large degree.  The regularity is bounded by $\deg \lcm(m_1,\ldots,m_r) - n + 1$, which is when all the binomial coefficients in the sum become polynomial.
\begin{definition}
	The g-corners of ideal $I$ are the monomials that minimally generate $\LT I$.
\end{definition}
The set of g-corners is uniquely determined by $I$ and the local order $>$.  The g-corners of $I$ can easily be found from $\LT G$ for any standard basis $G$, and the g-corners completely determine $H_I$.

\begin{figure}[ht]\label{fig:stair}
  \centering
  \includegraphics[width=.9\columnwidth]{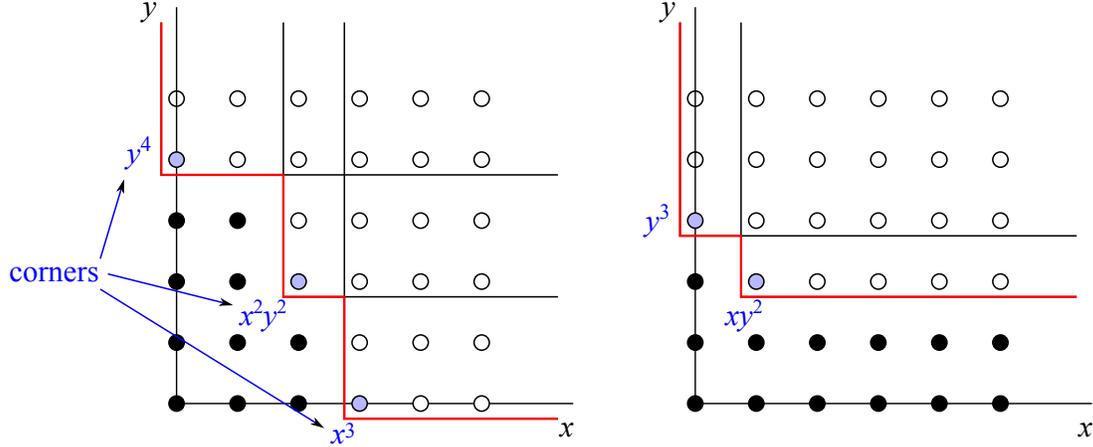}
  \caption{A zero-dimensional staircase, and a one-dimensional staircase.}
\end{figure}

\begin{claim}
	For sufficiently high degree, $H_I$ can be expressed as a polynomial $p$.  If $p = 0$ then $b$ is an isolated point of $J$.  Otherwise the largest dimension component of $J$ through $b$ has dimension $a+1$ where $a$ is the degree of $p$.
\end{claim}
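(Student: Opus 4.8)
The first assertion requires little beyond the inclusion--exclusion formula already displayed: it expresses $H_I(d)$ as a finite $\B Z$-linear combination of binomial coefficients $\binom{d - \deg\lcm(S) + n - 1}{n-1}$, each of which coincides with a polynomial in $d$ of degree $n-1$ once $d > \deg\lcm(m_1,\dots,m_r) - n + 1$; hence for $d$ past this bound $H_I$ equals a polynomial $p$, whose degree may be strictly smaller than $n-1$, or which may be identically $0$, because the top-degree terms of the summands can cancel. The real content of the claim is therefore the identification of $\deg p$ with a geometric dimension, and the plan is to route this through the associated graded ring of $\ring/I$.

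Because the local order $>$ is anti-graded, the leading term $\LT f$ of any $f \in \ring$ is a term of its lowest-degree homogeneous component; consequently, passing to lowest-degree forms carries a standard basis of $I$ to a homogeneous generating set of the tangent-cone ideal, and the monomials of degree $d$ not in $\LT I$ descend to a $k$-basis of $\operatorname{gr}_{\F m}(\ring/I)_d$. (I would record this standard compatibility between standard bases for an anti-graded local order and the Hilbert function of the tangent cone as a lemma in Section~\ref{prelim}, citing \cite{Cox-Little-OShea:using,Singular-book-02}.) Thus $H_I(d) = \dim_k \operatorname{gr}_{\F m}(\ring/I)_d$, and summing over degrees gives $\sum_{e=0}^d H_I(e) = \dim_k \ring/(I + \F m^{d+1})$, which is exactly the Hilbert--Samuel function of the Noetherian local ring $\ring/I$. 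By the dimension theory of local rings this function agrees, for $d \gg 0$, with a polynomial of degree $\dim(\ring/I)$; taking first differences, $H_I$ is eventually the polynomial $p$, with $\deg p = \dim(\ring/I) - 1$ when $\dim(\ring/I) \ge 1$ and $p = 0$ precisely when $\ring/I$ is Artinian, i.e. $\dim(\ring/I) = 0$.

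It remains to match $\dim(\ring/I)$ with the geometry. Since Krull dimension is insensitive to nilpotents, $\dim(\ring/I) = \dim(\ring/\sqrt I)$, and this is the local dimension of $\BF V(J)$ at $b$: the minimal primes over $I$ correspond to the branches at $b$ of the irreducible components of $\BF V(J)$ through $b$, each of the same dimension as the component containing it, so $\dim(\ring/I)$ is the largest dimension among those components. Combining this with the previous paragraph: if $p = 0$ then $\dim(\ring/I) = 0$, so $b$ is an isolated point of $\BF V(J)$; otherwise the largest-dimension component of $\BF V(J)$ through $b$ has dimension $\dim(\ring/I) = a + 1$ where $a = \deg p$. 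The one nonroutine ingredient is the bridge in the second paragraph --- that counting standard monomials degree by degree computes $\dim_k \operatorname{gr}_{\F m}(\ring/I)_d$ exactly, not merely an upper bound --- which is precisely where the anti-graded hypothesis on $>$ is used; everything else is standard Hilbert--Samuel theory together with the behaviour of Krull dimension under passage to the reduced ring.
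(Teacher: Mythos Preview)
Your argument is correct. Note, however, that the paper does not actually supply a proof of this claim: the discussion preceding it establishes only the first assertion (eventual polynomiality of $H_I$) via the inclusion--exclusion formula and the polynomial behaviour of binomial coefficients, and the claim itself is then stated without further justification, presumably as a well-known fact from local dimension theory. So there is no ``paper's own proof'' to compare against for the dimension statement.

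What you have written is the standard route one would take to justify it, and it is sound. The key step you flag --- that for an anti-graded local order the count of degree-$d$ standard monomials equals $\dim_k \operatorname{gr}_{\F m}(\ring/I)_d$, equivalently that $\sum_{e\le d} H_I(e)$ is the Hilbert--Samuel function of $\ring/I$ --- is exactly the place where the anti-graded hypothesis enters, and your suggestion to record it as a lemma with references to \cite{Cox-Little-OShea:using,Singular-book-02} is appropriate. From there the identification $\deg p + 1 = \dim(\ring/I)$ (with $p=0$ iff $\dim(\ring/I)=0$) via Hilbert--Samuel theory, and the passage from $\dim(\ring/I)$ to the maximal dimension of a component of $\BF V(J)$ through $b$, are routine. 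Your proof is a genuine addition relative to the paper, which leaves the geometric content of the claim unargued.
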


Unfortunately standard basis algorithms such as Buchberger are not numerically stable.  Small error in the calculation of $b$ will cause large errors in the output.  An alternate approach is to use the dual space instead, which can recover the same information as a standard basis and can be calculated with numerical stability.

\section{The Dual Space}\label{duals}
Considering $\ring$ as a vector space, for each monomial $x^\alpha$ there is a linear functional $\partial^\alpha:\ring \to k$ in $\ring^*$ defined by
	\[\partial^\alpha\left(\sum_{\beta \in \B N^n} c_\beta x^\beta \right) = c_\alpha. \]
Let $\dual$ denote the vector space spanned by these monomial dual vectors.  We will refer to $\dual$ as the dual space of $\ring$ even though it is technically a proper subset of $\ring^*$.  By equipping $\dual$ with multiplication $\partial^\alpha \partial^\beta = \partial^{\alpha+\beta}$, it has a $k$-algebra structure $\dual = k[\partial_1,\ldots,\partial_n]$ where $\partial_i$ is the dual element corresponding to $x_i$.  We give this ring a global monomial order $\succ$ which is the reverse of the order on $R_m$, so if $x^\alpha < x^\beta$ then $\partial^\alpha \succ \partial^\beta$.

The dual space is sometimes defined in terms of differentials instead \cite{Mourrain:inverse-systems}.  For $f \in \ring$
	\[ \partial^\alpha(f) = \frac{1}{\prod_i \alpha_i!} \frac{\partial^{\alpha_1}}{\partial x_1^{\alpha_1}} \cdots \frac{\partial^{\alpha_n}}{\partial x_n^{\alpha_n}}f\bigg|_{x=0}. \]

\begin{definition}
	The dual space of the ideal $I$, $\dual[I]$, is the subset of $\dual$ that annihilates $I$.  The truncated dual space of $I$, $\dual^{(d)}[I]$, is the subset of $\dual[I]$ of functionals with lead term of degree $d$ or less.
\end{definition}

\begin{theorem}
\label{compThm}
	Any monomial $x^\alpha$ is in $\LT I$ if and only if the corresponding dual monomial $\partial^\alpha$ is not in $\LTg \dual[I]$.  Equivalently, $\LTg \dual[I] = \dual[\LT I]$.
\end{theorem}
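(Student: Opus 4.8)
The plan is to show that the two monomial sets $\{x^\alpha : x^\alpha \in \LT I\}$ and $\{x^\alpha : \partial^\alpha \in \LTg\dual[I]\}$ are complementary, by establishing (i) they are disjoint and (ii) at every degree their degree-$\le d$ truncations have complementary cardinalities; a disjoint pair of sets of monomials whose sizes add up to the total number of monomials of that degree must partition it, and this gives the first assertion. The equivalent reformulation is then a one-line consequence.

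First I would prove the disjointness. Suppose $x^\alpha \in \LT I$, so there is $f \in I$ with $\LT f = x^\alpha$, hence $f = x^\alpha + \sum_{x^\beta < x^\alpha} c_\beta x^\beta$. If also $\partial^\alpha \in \LTg\dual[I]$, there is $\ell \in \dual[I]$ with $\LTg \ell = \partial^\alpha$, hence $\ell = \partial^\alpha + \sum_{\partial^\gamma \prec \partial^\alpha} e_\gamma \partial^\gamma$. Then $\ell(f) = \sum_\gamma e_\gamma c_\gamma$, and a term with $\gamma \neq \alpha$ can be nonzero only if $\partial^\gamma$ occurs in $\ell$, forcing $\partial^\gamma \prec \partial^\alpha$, i.e. $x^\gamma > x^\alpha$, and $x^\gamma$ occurs in $f$, forcing $x^\gamma < x^\alpha$ — impossible. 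So $\ell(f) = e_\alpha c_\alpha = 1 \neq 0$, contradicting $f \in I$ and $\ell \in \dual[I]$. This step uses only that $\succ$ is literally the reverse of $>$.

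For the cardinality count I would invoke Macaulay duality. A functional lies in $\dual^{(d)}$ exactly when it vanishes on $\F m^{d+1}$, so restriction identifies $\dual^{(d)}[I]$ with $(\ring/(I+\F m^{d+1}))^*$, whence $\dim_k \dual^{(d)}[I] = \dim_k \ring/(I+\F m^{d+1})$. Since $>$ is anti-graded, standard-basis theory shows the monomials of degree $\le d$ outside $\LT I$ descend to a $k$-basis of $\ring/(I+\F m^{d+1})$, so this dimension equals $\#\{x^\alpha : |\alpha|\le d,\ x^\alpha \notin \LT I\} = \sum_{e\le d} H_I(e)$. On the dual side, $\succ$ is a graded monomial order, so the leading term of a functional of order $e$ is a monomial of degree $e$; hence the number of leading monomials of $\dual[I]$ of degree $\le d$ equals $\dim_k \dual^{(d)}[I]$ as well. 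Therefore $\#\{x^\alpha : |\alpha|\le d,\ \partial^\alpha \in \LTg\dual[I]\} = \#\{x^\alpha : |\alpha|\le d,\ x^\alpha \notin \LT I\}$ for every $d$.

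Combining: for fixed $d$, the set $A$ of degree-$\le d$ monomials in $\LT I$ and the set $B$ of degree-$\le d$ monomials $x^\alpha$ with $\partial^\alpha \in \LTg\dual[I]$ are disjoint and satisfy $|A| + |B| = \#\{\text{monomials of degree} \le d\}$, so they partition it; letting $d \to \infty$ gives $x^\alpha \in \LT I \iff \partial^\alpha \notin \LTg\dual[I]$. Since $\LT I$ is a monomial ideal, $\dual[\LT I] = \operatorname{span}_k\{\partial^\alpha : x^\alpha \notin \LT I\}$, which by the displayed equivalence equals $\operatorname{span}_k\{\partial^\alpha : \partial^\alpha \in \LTg\dual[I]\} = \LTg\dual[I]$. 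The one place needing care — the main obstacle, such as it is — is the claim that $\dim_k \ring/(I+\F m^{d+1})$ counts monomials of degree $\le d$ outside $\LT I$: Mora normal forms in a local ring need not be polynomial, but truncating modulo $\F m^{d+1}$ makes the quotient finite-dimensional and reduces this to the standard fact that a standard basis yields a monomial $k$-basis of the quotient. Everything else is then forced.
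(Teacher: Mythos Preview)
Your argument is correct. The paper's own proof is quite different in structure: it cites an external reference (\cite{LVZ-higher}, Theorem 3.4) for the zero-dimensional case, and then reduces the general case to the zero-dimensional one by replacing $I$ with $I' = I + \F m^{|\alpha|+1}$ and checking that this swap changes neither the condition $x^\alpha \in \LT I$ nor the condition $\partial^\alpha \in \LTg \dual[I]$. Your route is self-contained: you prove disjointness of the two monomial sets directly from the reversal of the orders, and then run a dimension count via the identification $\dual^{(d)}[I] \cong (\ring/(I+\F m^{d+1}))^*$ together with $\LT(I+\F m^{d+1}) = \LT I + \F m^{d+1}$ (which uses that $>$ is anti-graded). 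In effect your counting step \emph{is} the content of the cited zero-dimensional result, so you have unpacked what the paper outsources. What your approach buys is independence from the external reference and a transparent explanation of why the orders being reverse to one another is exactly what makes the pairing of lead terms nonzero; what the paper's approach buys is brevity and a clean modular reduction that would let a reader already familiar with the zero-dimensional statement skip the linear-algebra bookkeeping.
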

\begin{proof}
	For a proof in the case where $I$ is zero-dimensional, see \cite{LVZ-higher}, Theorem 3.4.  For positive dimensional $I$, fixing any $\alpha$, let $I' = I + \F m^{|\alpha|+1}$ where $\F m$ is the maximal ideal $\ideal{x_1,\ldots,x_n}$.  Note that $I'$ is zero-dimensional because $H_{I'}(c) = 0$ for $c > |\alpha|$, so then $x^\alpha \in \LT I'$ if and only if $\partial^\alpha \notin \LTg \dual[I']$.  Since $x^\alpha$ has lower degree than any element of $\F m^{|\alpha|+1}$, then $x^\alpha \in \LT I'$ if and only if $x^\alpha \in \LT I$.  Additionally $\dual^{(d)}[I] = \dual^{(d)}[I']$ because the elements of $\dual^{(d)}$ and $\F m^{|\alpha|+1}$ have no terms in common.  Therefore $\partial^\alpha \in \LTg \dual[I']$ if and only if $\partial^\alpha \in \LTg \dual[I]$.
\end{proof}

\begin{figure}[ht]
  \centering
  \includegraphics[width=.4\columnwidth]{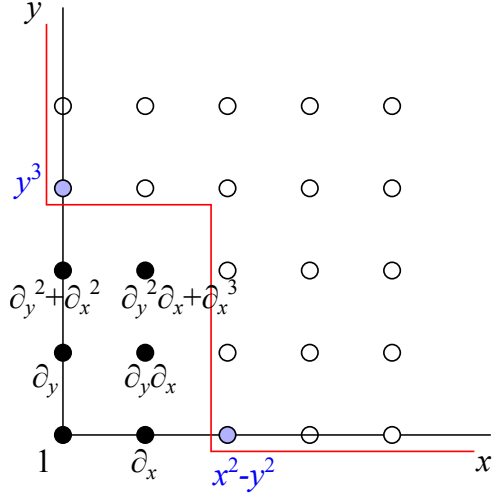}
  \caption{A standard basis (in blue) and dual basis (in black) for $I = \ideal{x^2-y^2,y^3}$.}
\end{figure}

\begin{corollary}
	$H_I(d) = \dim \dual^{(d)}[I] - \dim \dual^{(d-1)}[I]$.
\end{corollary}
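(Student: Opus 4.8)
The plan is to reduce the Corollary to a standard dimension count for spaces of dual functionals, using Theorem \ref{compThm} to pass between lead terms of duals and the initial ideal. First I would record that the order $\succ$ on $\dual$ is graded: since it is the reverse of the anti-graded local order $>$, dual monomials of higher degree are $\succ$-larger than those of lower degree. Hence for any $q \in \dual$ the lead term $\LTg q$ has degree equal to the top degree occurring in $q$, so $\dual^{(d)}[I]$ is exactly the set of functionals of $\dual[I]$ supported in degrees $\le d$; in particular it is a finite-dimensional subspace of the span of $\{\partial^\alpha : |\alpha| \le d\}$.

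Next I would invoke the elementary linear-algebra fact that a finite-dimensional space $V$ of functionals, equipped with the monomial order $\succ$, has $\dim V$ equal to the number of distinct lead terms $\LTg q$ realized by nonzero $q \in V$: one produces a basis of $V$ in echelon form with respect to $\succ$ by repeatedly clearing repeated lead terms, just as in Gaussian elimination. Applying this with $V = \dual^{(d)}[I]$, the task becomes to identify the set $L_d := \{\LTg q : 0 \neq q \in \dual^{(d)}[I]\}$.

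Then I would show $L_d = \{\partial^\alpha \in \LTg \dual[I] : |\alpha| \le d\}$. The inclusion $\subseteq$ is immediate, since every nonzero element of $\dual^{(d)}[I]$ lies in $\dual[I]$ and by definition has lead term of degree $\le d$. For $\supseteq$, given $\partial^\alpha \in \LTg \dual[I]$ with $|\alpha| \le d$, choose $q \in \dual[I]$ with $\LTg q = \partial^\alpha$; since $\deg \LTg q = |\alpha| \le d$, this $q$ lies in $\dual^{(d)}[I]$, so $\partial^\alpha \in L_d$. By Theorem \ref{compThm}, $\partial^\alpha \in \LTg \dual[I]$ if and only if $x^\alpha \notin \LT I$, whence
\[ \dim \dual^{(d)}[I] = \#\{\alpha : |\alpha| \le d,\ x^\alpha \notin \LT I\}. \]
Subtracting the corresponding formula for $d-1$ leaves $\#\{\alpha : |\alpha| = d,\ x^\alpha \notin \LT I\}$, which is $H_I(d)$ by definition of the Hilbert function.

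The echelon-basis step carries no real difficulty; the part needing the most care is the equality $L_d = \{\partial^\alpha \in \LTg \dual[I] : |\alpha| \le d\}$ — in particular the observation that any functional witnessing a lead term of degree $\le d$ already belongs to the truncated dual $\dual^{(d)}[I]$ — and the fact that it is precisely the gradedness of $\succ$ that makes "lead term of degree $\le d$" equivalent to "supported in degrees $\le d$." I would make both of these points explicit before concluding.
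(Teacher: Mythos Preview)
Your argument is correct and is precisely the natural unpacking of why the statement follows from Theorem~\ref{compThm}; the paper gives no separate proof at all, treating the corollary as immediate. Your explicit attention to the gradedness of $\succ$ and the echelon-basis count of lead terms simply spells out what the paper leaves to the reader.
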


\begin{theorem}
	For any $f \in \ring$, $f \in I$ if and only if $p(f) = 0$ for all $p \in \dual[I]$.
\end{theorem}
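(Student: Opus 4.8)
The plan is to prove the two implications separately. The forward direction is immediate: if $f \in I$ then every $p \in \dual[I]$ annihilates all of $I$ by definition, so $p(f) = 0$. For the converse I would argue by contrapositive, exhibiting for each $f \notin I$ a functional $p \in \dual[I]$ with $p(f) \neq 0$.

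First, fix a standard basis $G = \{g_1,\ldots,g_r\}$ of $I$ with respect to $>$, so that $uf = \NF_G(f) + a_1 g_1 + \cdots + a_r g_r$ for some unit $u \in \ring$ and $a_i \in \ring$. If $\NF_G(f) = 0$ then $uf \in I$, hence $f = u^{-1}(uf) \in I$, contrary to assumption; so $\NF_G(f) \neq 0$. Write $f = h + g$ with $g := u^{-1}(a_1 g_1 + \cdots + a_r g_r) \in I$ and $h := u^{-1}\,\NF_G(f)$. Since $>$ is multiplicative and $1$ is its largest monomial, the unit $u^{-1}$ has lead term a nonzero scalar, so the leading monomial of $h$ equals that of $\NF_G(f)$, say $x^\alpha$, appearing in $h$ with some nonzero coefficient $\mu$. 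By the normal form property $x^\alpha$ is divisible by no $\LT g_i$, and as $G$ is a standard basis this gives $x^\alpha \notin \LT I$. Theorem~\ref{compThm} then yields $\partial^\alpha \in \LTg \dual[I]$, so I may choose $q \in \dual[I]$ with $\LTg q = \partial^\alpha$, rescaled so that $\partial^\alpha$ occurs in $q$ with coefficient $1$.

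The last step is the computation $q(f) = q(g) + q(h)$. Since $g \in I$ and $q \in \dual[I]$, the first summand is $0$. For the second, note that every monomial of $h$ other than $x^\alpha$ is strictly $>$-below $x^\alpha = \LT h$, hence — since $\succ$ is the reverse order — its dual monomial is strictly $\succ$-above $\partial^\alpha$; on the other hand $\LTg q = \partial^\alpha$ forces every dual monomial occurring in $q$ other than $\partial^\alpha$ to be strictly $\succ$-below $\partial^\alpha$. Thus in the pairing $q(h)$ the supports meet only at $x^\alpha$, so $q(h) = \mu \neq 0$ and therefore $q(f) = \mu \neq 0$, contradicting that every $p \in \dual[I]$ kills $f$.

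I do not expect a serious obstacle here; the only points needing care are the handling of the unit $u$ (absorb it into the ideal part of $f$, then use that $\LT(u^{-1}\NF_G(f))$ is a scalar multiple of $\LT \NF_G(f)$) and the order bookkeeping in the last step, where one plays the local order $>$ off against its reverse $\succ$. For an alternative, standard-basis-free argument I could instead, for each $d$, invoke that $I + \F m^{d+1}$ is zero-dimensional and that the natural pairing between $\ring/(I+\F m^{d+1})$ and $\dual[I+\F m^{d+1}] = \dual^{(d)}[I]$ is perfect (Macaulay's duality), so $p(f)=0$ for all $p\in\dual[I]$ forces $f \in I + \F m^{d+1}$ for every $d$, whence $f \in I$ by the Krull intersection theorem applied to $\ring/I$.
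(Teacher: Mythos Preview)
Your main argument is correct and follows the paper's proof essentially line for line: both take a standard basis, use the normal form to isolate a leading monomial $x^\alpha \notin \LT I$, invoke Theorem~\ref{compThm} to obtain $q \in \dual[I]$ with $\LTg q = \partial^\alpha$, and then exploit the reversal between $>$ and $\succ$ to conclude that $q$ and the normal-form part of $f$ share only the monomial $x^\alpha$, whence $q(f)\neq 0$; your bookkeeping with the unit $u$ is in fact slightly more careful than the paper's. The Krull-intersection alternative you sketch at the end is also valid and is a genuinely different route---it avoids building an explicit witness functional at the cost of using Macaulay duality in each zero-dimensional truncation $I+\F m^{d+1}$ together with $\bigcap_d (I+\F m^{d+1}) = I$---but the paper does not take this path.
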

\begin{proof}
	If $f \in I$ it is clear that $p(f) = 0$ for all $p \in \dual[I]$.  Suppose $f \notin I$, and let $G = \{g_1,\ldots,g_r\}$ be a standard basis of $I$.  Then $f$ can be expressed as
		\[ f = ua_1g_1 + \cdots + ua_rg_r + u\NF_G(f) \]
	where $u$ is a unit, each $a_i$ is a polynomial, and $\LT \NF_G(f) \notin \LT I$.  Let the lead monomial of $\NF_G(f)$ be $x^\alpha$.  Because $u$ is a unit its lead monomial is 1, so the lead monomial of $u\NF_G(f)$ is also $x^\alpha$.  By Theorem \ref{compThm} there is $p \in \dual[I]$ with lead monomial $\partial^\alpha$.  Due to the reverse nature of $>$ and $\succ$, $p$ and $u\NF_G(f)$ have only their lead monomial in common.  Therefore $p(u\NF_G(f)) \neq 0$.  Note that $p(ua_ig_i) = 0$ since $ua_ig_i \in I$ so $p(f) = p(u\NF_G(f)) \neq 0$.
\end{proof}

As a consequence, knowing a basis for the $\dual^{(c)}[I]$ at each degree $c$ up to some finite degree $d$ provides some of the same information as a standard basis.  In particular it reveals the values of the Hilbert function of $I$ for all degrees up to degree $d$.  Algorithms exist for finding a basis for the truncated dual space up to any particular degree.  Two such algorithms are discussed below.  Both reduce the problem to a system of linear constraints.  Finding the kernel of a matrix can be done in a numerically stable way by using singular value decomposition (SVD), which is what makes the dual space approach better suited for numerical situations.  

If $I$ is known a priori to be zero-dimensional then $\dual[I]$ has finite dimension, and it is possible to find an explicit basis for it.  Finding the truncated dual at each successive degree, there will be some $d$ for which $\dual^{(d)}[I] = \dual[I]$, and so $\dim \dual^{(d+1)}[I] = \dim \dual^{(d)}[I]$, at which point we know the entire dual space has been found.  The ideal membership question can then be answered for any element of $\ring$ and the entire Hilbert function can be calculated ($H_I(c) = 0$ for all $c > d$).

If $I$ is not zero-dimensional, or if the dimension is not known, this strategy will not work.  In general the dual space of $I$ is not finite dimensional, so it is not possible to explicitly compute a basis for the entire dual space.  The best we can do is find a truncated dual basis up to any finite degree $d$.  The difficulty with this approach is that it's difficult to tell what degree $d$ one needs to compute to in order to find all the relevant information about the ideal.  Additionally, the truncated dual space can't be used for an ideal membership test.  Given a polynomial $f$, even if $p(f) = 0$ for all $p \in \dual^{(d)}[I]$, it may still be that $f \notin I$.  

Something that the truncated dual space can tell us is what the g-corners of $I$ are up to degree $d$.  Suppose all the g-corners of $I$ are known up to degree $d-1$.  Then the g-corners at degree $d$ are exactly the monomials missing from $\LTg \dual^{(d)}[I]$ that are not multiples of the previously found g-corners.  Computing $\dual^{(d)}[I]$ for successive $d$, if we could determine at what point all the g-corners of $I$ had been found, then we could fully describe the Hilbert function $H_I$, since $H_I$ is determined by the g-corners of $I$.  We will present a way to do so.  These methods will also provide a way to answer the ideal membership test for polynomials up to some fixed degree.

\subsection{Dayton-Zeng Algorithm}
A simple truncated dual space algorithm is one by Dayton and Zeng \cite{DZ-05}, using ideas of Macaulay \cite{Macaulay:modular-systems}.  Given a finite generating set $F$, $I$ considered as a vector space can be expressed as the span of the monomial multiples of the generators:
	\[ I = \operatorname{span}\{x^\alpha f|\; f\in F,\; \alpha \in \B N^n\}. \]
\begin{proposition}
	$\dual^{(d)}[I]$ is the set of functionals $p \in \dual^{(d)}$ satisfying $p(x^\alpha f) = 0$ for all $f \in F$ and $\alpha \in \B N^n$ with $|\alpha|+\deg \LT f \leq d$.
\end{proposition}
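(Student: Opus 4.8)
The plan is to show both inclusions between $\dual^{(d)}[I]$ and the set, call it $V$, of functionals $p \in \dual^{(d)}$ with $p(x^\alpha f) = 0$ for all $f \in F$ and $\alpha \in \B N^n$ satisfying $|\alpha| + \deg \LT f \le d$. The key structural fact to exploit is the anti-graded nature of the local order $>$ together with the reverse order $\succ$ on $\dual$: a functional with lead term of degree at most $d$ can only pair nontrivially with a power series whose lowest-degree term has degree at most $d$. In particular, if $p \in \dual^{(d)}$ and $g \in \ring$ has all terms of degree $> d$, then $p(g) = 0$.

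First I would handle the easy inclusion $\dual^{(d)}[I] \subseteq V$. If $p \in \dual^{(d)}[I]$ then $p$ annihilates all of $I$, and each $x^\alpha f$ with $f \in F$ lies in $I$ (using the vector-space description $I = \operatorname{span}\{x^\alpha f\}$ recalled just above the statement), so certainly $p(x^\alpha f) = 0$; this holds a fortiori for those $\alpha$ with $|\alpha| + \deg \LT f \le d$, so $p \in V$. No subtlety here.

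The substantive direction is $V \subseteq \dual^{(d)}[I]$. Take $p \in V$; I must show $p(h) = 0$ for every $h \in I$, not merely for the low-degree generator multiples in the defining condition of $V$. Writing $h = \sum_{f \in F} a_f f$ with $a_f \in \ring$, and expanding each $a_f$ as a power series $\sum_\alpha c_{f,\alpha} x^\alpha$, I would split the (formal) sum $p(h) = \sum_{f,\alpha} c_{f,\alpha}\, p(x^\alpha f)$ into the terms with $|\alpha| + \deg \LT f \le d$ and those with $|\alpha| + \deg \LT f > d$. The first group vanishes term-by-term since $p \in V$. For the second group, each $x^\alpha f$ has lead term $x^\alpha \LT f$ of degree $|\alpha| + \deg \LT f > d$, and because the order is anti-graded every term of $x^\alpha f$ has degree $\ge |\alpha| + \deg \LT f > d$; since $p \in \dual^{(d)}$ has lead term of degree $\le d$ and the orders $>$ and $\succ$ are mutual reverses, $p$ shares no monomial with $x^\alpha f$, hence $p(x^\alpha f) = 0$. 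Therefore $p(h) = 0$.

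The main obstacle — and the point that needs care rather than cleverness — is justifying that $p(h) = \sum_{f,\alpha} c_{f,\alpha} p(x^\alpha f)$ is legitimate when the $a_f$ are genuine power series (infinitely many terms): one must observe that, restricted to monomials of degree $\le d$, only finitely many of the $x^\alpha f$ contribute, so evaluating $p$ (which depends only on coefficients of monomials up to its lead degree $\le d$) commutes with the infinite sum. Once this finiteness-of-support observation is in place, the degree bookkeeping above closes the argument.
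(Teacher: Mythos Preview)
Your proposal is correct and follows the same idea the paper relies on: the paper does not give a formal proof but only the one-line justification that ``higher degree multiples of the generators need not be considered \ldots\ because they will not have any terms of degree $d$ or less,'' which is precisely your degree-bookkeeping argument for the nontrivial inclusion. Your treatment is in fact more careful than the paper's, since you explicitly address the finiteness issue when the coefficients $a_f$ are genuine power series, a point the paper's informal spanning description $I = \operatorname{span}\{x^\alpha f\}$ sweeps under the rug.
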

Higher degree multiples of the generators need not be considered when calculating $\dual^{(d)}[I]$ because they will not have any terms of degree $d$ or less, so will always be orthogonal to $\dual^{(d)}[I]$.  Let $A_d$ be the set of elements $x^\alpha f$ as above.  To find the subspace of $\dual^{(d)}$ that annihilates all of $A_d$, we construct the Macaulay array $\BF M(F,d)$, which is the coefficient matrix of the elements of $A_d$.  The matrix $\BF M(F,d)$ has entries in $k$ with columns indexed by the monomials of $\ring$ with degree $\leq d$ and a row for each $a \in A_d$.  If $a$ is the $i$th element of $A$ and $x^\beta$ is the $j$th monomial with degree $\leq d$, then $m_{ij} = \partial^\beta(a)$, with $\BF M(F,d) = (m_{ij})$.  The truncated dual corresponds to the kernel of $\BF M(F,d)$.

\begin{example}
\label{MacExample}
	Let $F = \{x-y^3, x^2\} \subset k[x,y]$.  Then the Macaulay array $\BF M(F,3)$ is
		\[ \bordermatrix{ & 1 & \partial_x & \partial_y & \partial_x^2 & \partial_y\partial_x & \partial_y^2 & \partial_x^3 & \partial_y\partial_x^2 & \partial_y^2\partial_x & \partial_y^3 \cr
           x-y^3     & 0 & 1 & 0 & 0 & 0 & 0 & 0 & 0 & 0 &-1 \cr
           x^2       & 0 & 0 & 0 & 1 & 0 & 0 & 0 & 0 & 0 & 0 \cr
           x(x-y^3)  & 0 & 0 & 0 & 1 & 0 & 0 & 0 & 0 & 0 & 0 \cr
           y(x-y^3)  & 0 & 0 & 0 & 0 & 1 & 0 & 0 & 0 & 0 & 0 \cr
           x(x^2)    & 0 & 0 & 0 & 0 & 0 & 0 & 1 & 0 & 0 & 0 \cr
           y(x^2)    & 0 & 0 & 0 & 0 & 0 & 0 & 0 & 1 & 0 & 0 \cr
           x^2(x-y^3)& 0 & 0 & 0 & 0 & 0 & 0 & 1 & 0 & 0 & 0 \cr
           xy(x-y^3) & 0 & 0 & 0 & 0 & 0 & 0 & 0 & 1 & 0 & 0 \cr
           y^2(x-y^3)& 0 & 0 & 0 & 0 & 0 & 0 & 0 & 0 & 1 & 0 }. \]
   The kernel of this matrix has dimension 4, and a basis for it corresponds to the functionals $1$, $\partial_y$, $\partial_y^2$ and $\partial_y^3 + \partial_x$.  These form a basis for the truncated dual space $\dual^{(3)}[I]$.
\end{example}

\subsection{Mourrain Algorithm}
The second algorithm is due to Bernard Mourrain \cite{Mourrain:inverse-systems}.  We define the ``derivative'' of a dual functional with respect to a given variable $\partial_i$.  Let $\D_i:\dual \to \dual$ be the linear map defined by
	\[\D_i (\partial^\alpha) = \left\{ \begin{array}{ll}
		\partial^\alpha/\partial_i & \text{if}\; \partial_i|\partial^\alpha \\
	  0 & \text{otherwise}
	  \end{array}.\right.\]
Note that $\D_i \D_j = \D_j \D_i$.  Also for any $f \in \ring$ and $p \in \dual$ we have $p(x_if) = \D_i p(f)$.  It follows that if $p \in \dual[I]$ then $\D_i p \in \dual[I]$ for all $i$.  In fact there is a stronger result:
\begin{theorem}[\cite{Mourrain:inverse-systems}, Theorem 4.2]
	For any $p \in \dual$, $p \in \dual[I]$ if and only if $\D_i p \in \dual[I]$ for all $i$ and $p(f) = 0$ for all $f \in F$.
\end{theorem}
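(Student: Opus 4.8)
The plan is to prove the two implications directly, using the two facts recorded just before the statement: the commutation identity $p(x_i f) = \D_i p(f)$, valid for every $f \in \ring$ and every $p \in \dual$, and the description of $I$ (from the Dayton-Zeng discussion) as the span of the monomial multiples $x^\gamma f$ with $f \in F$ and $\gamma \in \B N^n$.

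For the forward direction, suppose $p \in \dual[I]$. Given any $f \in I$ and any index $i$, we have $x_i f \in I$, hence $(\D_i p)(f) = p(x_i f) = 0$; as $f \in I$ was arbitrary this shows $\D_i p \in \dual[I]$. Since moreover $F \subset I$, certainly $p(f) = 0$ for every $f \in F$. This direction is essentially the observation already noted above the theorem and needs no further work.

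For the reverse direction, suppose $\D_i p \in \dual[I]$ for every $i$ and $p(f) = 0$ for every $f \in F$; the goal is to conclude that $p$ annihilates all of $I$. Because every element of $I$ is a (power series) combination of the monomials $x^\gamma f$, and because $p \in \dual = k[\partial_1,\ldots,\partial_n]$ has finite support, so that $p$ evaluated on such a combination is a finite $k$-linear combination of the values $p(x^\gamma f)$, it is enough to show $p(x^\gamma f) = 0$ for every $f \in F$ and every $\gamma \in \B N^n$. If $\gamma = 0$ this is exactly the hypothesis on $F$. If $\gamma \neq 0$, choose an index $i$ with $\gamma_i \geq 1$ and write $\gamma = \gamma' + e_i$; then $x^{\gamma'} f \in I$, and the commutation identity gives $p(x^\gamma f) = p(x_i \cdot x^{\gamma'} f) = (\D_i p)(x^{\gamma'} f)$, which vanishes because $\D_i p \in \dual[I]$. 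Hence $p \in \dual[I]$.

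I do not anticipate a real obstacle: the only nontrivial identity, $p(x_i f) = \D_i p(f)$, is already available, and the argument is a single application of it after lowering $\gamma$ by one in a coordinate where it is positive — no induction on the degree of $p$ or of $\gamma$ is actually required. The one point demanding a little care is purely bookkeeping: a functional in $\dual$ has finite support, so even though elements of the local ideal $I$ are infinite series, $p$ may be tested one monomial multiple $x^\gamma f$ at a time.
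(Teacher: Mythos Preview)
The paper does not give its own proof of this statement; it is quoted as Theorem~4.2 of \cite{Mourrain:inverse-systems} and used without argument. So there is nothing in the paper to compare against.

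That said, your proof is correct. The forward direction is immediate, and for the reverse direction your observation is the right one: since the hypothesis already places each $\D_i p$ in $\dual[I]$, a single application of $p(x_i g) = (\D_i p)(g)$ with $g = x^{\gamma'} f \in I$ suffices, and no induction on $|\gamma|$ is needed. Your bookkeeping remark about finite support is also to the point: because $p$ has bounded degree and each generator $f$ has a term of minimal degree, only finitely many of the values $p(x^\gamma f)$ are nonzero, so the infinite-sum issue in the local ring is harmless.
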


The dual elements with lead term of degree $d$ have derivatives which have lead term of degree $d-1$ or less.  This produces a way to build up $\dual[I]$ degree by degree.  Suppose $\beta_1,\ldots,\beta_r$ are a basis for $\dual^{(d-1)}[I]$.  Then for $p \in \dual^{(d)}[I]$, each derivative $\D_i p$ can be expressed in terms of this basis so
	\[ \D_i p = \sum_{j=1}^r \lambda^i_j \beta_j \]
for some coefficients $\lambda^i_j \in k$.  It can be shown that
	\[ p = \sum_{j=1}^r \lambda^1_j x_1\beta_j\big|_{x_2 = \cdots = x_n = 0} + \lambda^2_j x_2\beta_j\big|_{x_3 = \cdots = x_n = 0} + \cdots + \lambda^n_j x_n\beta_j \]
so the elements of $\dual^{(d)}[I]$ are all linear combinations of the terms of the form $x_i\beta_j\big|_{x_{i+1} = \cdots = x_n = 0}$.  However not all linear combinations work.  The fact that $\D_i\D_l p = \D_l\D_i p$ produces the relation
	\[ \sum_{j=1}^r (\lambda^l_j \D_i \beta_j - \lambda^i_j \D_l \beta_j) = 0 \quad \text{for each }1\leq i < l \leq n. \]
Since each $\D_i \beta_j$ is also in $\dual^{(d-1)}[I]$, it can be uniquely expressed in the basis $\beta_1,\ldots,\beta_r$ as $\D_i \beta_j = \sum_{l=1}^r \mu^i_{j,l} \beta_l$.  Then the above equation can be broken down into the linear relations
	\[ \sum_{j=1}^r (\lambda^l_j \mu^i_{j,m} - \lambda^i_j \mu^l_{j,m}) = 0 \]
	for each $1\leq i < l \leq n$, and $1\leq m \leq r$.  Finally $p(f_i) = 0$ for each generator $f_i$ produces another set of relations.  We can build a matrix with a row for each of these constraints and columns corresponding to the coefficients $\lambda^i_j$.  The kernel of this matrix corresponds to the space $\dual^{(d)}[I]$.

\section{Homogeneous Ideals}\label{homog}
If $I$ is homogeneous, then there is a criterion for deciding when all g-corners of $I$ have been found when searching degree by degree.  If $f$ and $g$ be homogeneous polynomials in $\ring$, then their S-pair is also homogeneous and
	\[ \deg S(f,g) = \deg \lcm(\LT f,\LT g) \leq \deg f + \deg g. \]
In addition, if $F$ is any set of homogeneous polynomials then $\NF_F(S(f,g))$ is also homogeneous with the same degree as $S(f,g)$.  Suppose that $F \subset I$ is a finite set of homogeneous polynomials with lead terms representing each of the g-corners up to degree $d$, but $F$ is not a standard basis.  By the Buchberger criterion, there is some pair $f_1,f_2 \in F$ with $g = \NF_F(S(f_1,f_2)) \neq 0$.  Then $\LT g$ is not divisible by any of the g-corners in $\LT F$, and $\deg g \leq \deg f_1 + \deg f_2 \leq 2d$, so there must be another g-corner with degree $\leq 2d$.
\begin{proposition}
\label{stopCrit}
	If $I$ is a homogeneous ideal and $C$ is the set of all g-corners of $I$ up to degree $d$ then either $C$ is the set of all g-corners of $I$ or there is an additional g-corner $m$ with $\deg m \leq \max_{a,b \in C} \deg \lcm(a,b) \leq 2d$.
\end{proposition}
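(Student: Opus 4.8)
The plan is to formalize the argument already sketched in the paragraph preceding the statement: lift the known g-corners to genuine homogeneous elements of $I$, and then apply the Buchberger criterion in the local ring.

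\emph{Step 1: construct a homogeneous candidate basis.} For each g-corner $m \in C$, since $m \in \LT I$ there is some $f \in I$ with $\LT f = m$. Because $>$ is anti-graded, $\LT f$ lies in the lowest-degree homogeneous component of $f$, so $f$ has no terms of degree less than $\deg m$; writing $f$ as the sum of its homogeneous components and using that $I$ is homogeneous, the degree-$(\deg m)$ component already lies in $I$ and still has lead term $m$. Collecting one such homogeneous polynomial for each element of $C$ produces a finite set $F \subset I$ of homogeneous polynomials with $\LT F = C$.

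\emph{Step 2: the dichotomy.} Next I would apply the Buchberger criterion for standard bases in the local ring (phrased via Mora's normal form). If $\NF_F(S(f_1,f_2)) = 0$ for every pair $f_1,f_2 \in F$, then $F$ is a standard basis of $I$, so $\ideal{\LT F} = \ideal{C} = \LT I$. Since the minimal monomial generating set of $\LT I$ is unique and each element of $C$ is, by definition, one of these minimal generators, $C$ cannot be a proper subset of them while still generating, so $C$ is exactly the set of all g-corners of $I$, and we are done. Otherwise, pick $f_1, f_2 \in F$ with $g := \NF_F(S(f_1,f_2)) \neq 0$. By the facts recalled just before the proposition, $g$ is homogeneous and
\[ \deg g = \deg S(f_1,f_2) = \deg \lcm(\LT f_1, \LT f_2) \leq \max_{a,b \in C} \deg \lcm(a,b) \leq 2d, \]
the last step because $\LT f_1, \LT f_2 \in C$ each have degree at most $d$. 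By the definition of the normal form, $\LT g$ is divisible by no element of $\LT F = C$. But $g \in I$, so $\LT g \in \LT I$, hence some g-corner $m$ of $I$ divides $\LT g$; this $m$ cannot lie in $C$, and $\deg m \leq \deg g \leq \max_{a,b\in C} \deg\lcm(a,b) \leq 2d$. This $m$ is the promised additional g-corner.

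\emph{Main obstacle.} The only genuinely delicate point is Step 1 — realizing each g-corner as the lead term of a \emph{homogeneous} element of $I$ — which uses both the homogeneity of $I$ and the anti-gradedness of $>$. The preservation of homogeneity through S-pair formation and Mora reduction, together with the degree identity $\deg S(f,g) = \deg\lcm(\LT f,\LT g)$, has already been established in the discussion before the proposition; and the use of the Buchberger criterion must be understood in its local form (with Mora's normal form in place of ordinary polynomial division). Everything else is bookkeeping with the uniqueness of minimal monomial generators.
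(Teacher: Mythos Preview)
Your approach is exactly the one the paper sketches in the paragraph preceding the proposition: realize each g-corner in $C$ by a homogeneous element of $I$, then invoke Buchberger's criterion. Your Step~1, producing homogeneous representatives via the anti-gradedness of $>$ and the homogeneity of $I$, is more carefully argued than in the paper and is correct.

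Step~2, however, contains a gap that the paper's own sketch also glosses over. Buchberger's criterion asserts that if every $\NF_F(S(f_1,f_2))$ vanishes then $F$ is a standard basis of the ideal $\ideal{F}$ it generates --- not of $I$. Your set $F$ was built only from the g-corners in $C$, so in general $\ideal{F}\subsetneq I$, and the implication ``all S-pairs reduce to zero $\Rightarrow$ $\ideal{\LT F}=\LT I$'' does not follow. Concretely, take $I=\ideal{x^2,\,y^{10}}$ and $d=3$: then $C=\{x^2\}$, $F=\{x^2\}$, there are no nontrivial S-pairs so Buchberger's criterion is vacuously satisfied, yet $C$ omits the g-corner $y^{10}$ of degree $10>2d$. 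Thus both the proposition as stated and your argument require an additional hypothesis --- for instance that $d$ is at least the maximal degree among a homogeneous generating set of $I$, so that one may enlarge $F$ to contain those generators and ensure $\ideal{F}=I$. This is exactly what the algorithm later enforces by initializing $d_{\max}=2\max_i\deg\LT f_i$, but it is absent from the proposition and from your Step~2; you should make it explicit.
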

So if finding bases for $\dual^{(c)}[I]$ at each $c$ up to $c = 2d$ reveals no g-corners with degree above $d$, then all g-corners have been found.  We would like to extend this idea to the more general case of non-homogeneous ideals.  Note that the bound of $2d$ can often be improved by taking $\max_{a,b \in C} \deg \lcm(a,b)$ instead.

Let $\ringh$ be the localization of $k[t,x_1,\ldots,x_n]$ by the maximal ideal $\langle t,x_1,\ldots,x_n\rangle$.  For $f \in \ring$ let $f^h \in \ringh$ denote the homogenization of $f$.  The \textit{\'ecart} of $f$ is the difference in total degree of the highest and lowest degree terms of $f$, or in other words the $t$-degree of the lead term of $f^h$.  For $p \in \dual$ let $p^h \in k[\partial_t,\partial_1,\ldots,\partial_n] = \dualh$ denote the homogenization of $p$.  Let $\psi:\dualh \to \dual$ be the function that dehomogenizes with respect to $t$.  So $\deh{q} = q|_{\partial_t = 1}$, and $\deh{p^h} = p$.  Abusing notation, we will also use $\psi$ to denote the dehomogenization function for $\ringh$.  For $I = \langle f_1,\ldots,f_s \rangle$, let $\Ih = \langle f_1^h,\ldots,f_s^h \rangle \subset \ringh$.  Note that $\Ih$ is not the same as the homogenization of $I$ and it will depend on the choice of generators of $I$.  We fix a particular set of generators $F = \{f_1,\ldots,f_s\}$ from here forward.  
It is easy to see that $\deh{\Ih} = I$, regardless of the choice of generators.

We take the local order on $\ringh$ to be some extension of the order $>$ on $\ring$ which is anti-graded and has $t > x_i$ for every $i$.  This ensures that for homogeneous $g \in \ringh$, $\LT \deh{g} = \dehw{\LT g}$.  The monomial order on $\dualh$ is taken to be the reverse of the order on $\ringh$, so $\partial_t \prec \partial_i$ for each $i$.  Note that for homogeneous $q \in \dualh$ this implies that $\LTg \deh{q} = \dehw{\LTg q}$.

\begin{theorem}
\label{homogCorner}
	If $G = \{g_1,\ldots,g_r\}$ is a homogeneous standard basis of $\Ih$, then $\deh{G} = \{\deh{g}_1,\ldots,\deh{g}_r\}$ is a standard basis of $I$.
\end{theorem}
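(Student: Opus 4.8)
The plan is to check the two defining properties of a standard basis directly for $\deh{G}$: that $\deh{G}\subset I$, and that $\ideal{\LT\deh{g_1},\dots,\LT\deh{g_r}}=\LT I$. The first is immediate, since each $g_j\in\Ih$ gives $\deh{g_j}\in\deh{\Ih}=I$. For the initial ideals, the main tool is the identity $\LT\deh{h}=\dehw{\LT h}$, valid for homogeneous $h\in\ringh$ and ensured by the anti-graded choice of local order on $\ringh$ with $t>x_i$. Since each $g_j$ is homogeneous this gives $\LT\deh{g_j}=\dehw{\LT g_j}$, and as $\deh{g_j}\in I$ this lead term lies in $\LT I$; hence $\ideal{\LT\deh{g_1},\dots,\LT\deh{g_r}}\subseteq\LT I$. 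Everything then comes down to the reverse inclusion.

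For the reverse inclusion, let $f\in I$ be nonzero; I will produce an index $j$ with $\LT\deh{g_j}\mid\LT f$. First I would reduce $f$ to a fully polynomial shape. Since $I=J\ring$ we may write $f=\sum_{i=1}^s a_i f_i$ with $a_i\in\ring$; picking $q\in R$ with $q(0)\neq 0$ that clears all the denominators gives $qf=\sum_{i=1}^s b_if_i$ with every $b_i\in R$ and $qf\in R$. As $q$ is a unit of $\ring$ whose lead term in the anti-graded local order is the nonzero constant $q(0)$, we have $\LT(qf)=q(0)\LT f$, the same monomial. Hence it suffices to argue for $qf$, and we may as well assume from the start that $f\in R$ and $f=\sum_i b_if_i$ with all $b_i\in R$.

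Now homogenize. Put $d_i=\deg(b_if_i)$ and $D=\max_i d_i$, and set $g=\sum_i t^{\,D-d_i}\,b_i^h f_i^h\in\ringh$. Since homogenization is multiplicative on polynomials, $b_i^h f_i^h=(b_if_i)^h$ has degree $d_i$, so each nonzero summand is homogeneous of degree $D$ and $g$ is homogeneous of degree $D$; moreover $g\in\ideal{f_1^h,\dots,f_s^h}=\Ih$ and $\deh{g}=\sum_i b_if_i=f\neq 0$, so $g\neq 0$. Because $G$ is a standard basis of $\Ih$, $\LT g\in\LT\Ih=\ideal{\LT g_1,\dots,\LT g_r}$, so $\LT g_j\mid\LT g$ for some $j$. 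Dehomogenization sends a monomial $t^a x^\alpha$ to $x^\alpha$ and therefore preserves divisibility of monomials, giving $\dehw{\LT g_j}\mid\dehw{\LT g}$. Applying the identity $\LT\deh{h}=\dehw{\LT h}$ to the homogeneous elements $h=g_j$ and $h=g$ converts this into $\LT\deh{g_j}\mid\LT\deh{g}=\LT f$. Thus $\LT f\in\ideal{\LT\deh{g_1},\dots,\LT\deh{g_r}}$, and since $f$ was arbitrary, $\LT I\subseteq\ideal{\LT\deh{g_1},\dots,\LT\deh{g_r}}$. Together with the first paragraph this shows $\deh{G}$ is a standard basis of $I$.

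I expect the construction of $g$ to be the delicate step: the two preliminary reductions both lean on multiplying by a unit of $\ring$ together with the order-specific fact that a unit of an anti-graded local ring has constant lead term, and the padding exponents $D-d_i$ must be chosen precisely so that $g$ is honestly homogeneous and dehomogenizes to $f$ itself rather than to some $t^k f$. Once $g$ is available, every assertion about lead terms is handed to us by $\LT\deh{h}=\dehw{\LT h}$, so no further work with the orderings is needed; it is worth keeping in mind only that the identity $\deh{\Ih}=I$, already recorded in the text, is what legitimizes both $\deh{g_j}\in I$ and $\deh{g}=f$.
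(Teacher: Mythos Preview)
Your proof is correct and follows essentially the same approach as the paper's: both construct, from an arbitrary $f\in I$, a homogeneous $g\in\Ih$ with $\deh{g}=f$ by padding the summands $t^{D-d_i}(b_if_i)^h$, then use the identity $\LT\deh{h}=\dehw{\LT h}$ to pull the standard-basis divisibility back from $\Ih$ to $I$. Your version is slightly more careful in explicitly clearing denominators to reduce to the polynomial case before homogenizing, a step the paper's proof leaves implicit.
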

\begin{proof}
 	Any homogeneous $g \in \Ih$ can be expressed as $g = \sum_i a_if_i^h$, so $\deh{g} = \sum_i \deh{a_i}f_i$ is in $I$.  Therefore $\deh{G} \subset I$.  Moreover $\dehw{\LT g} = \LT \deh{g} \in \LT I$.  For any $f = \sum_i b_if_i \in I$, let $d$ be the maximum degree of all $(b_if_i)^h$, and $c_i$ be the integer such that $t^{c_i}(b_if_i)^h$ has degree $d$.  Then $g = \sum_i t^{c_i}(b_if_i)^h \in \Ih$ is a homogeneous degree $d$ polynomial and has $\deh{g} = f$ and so $\dehw{\LT g} = \LT f$.  Therefore $\dehw{\LT \Ih} = \LT I$.  For any $m$ which is a g-corner of $I$, $t^am \in \LT \Ih$ for some $a$.  Taking $a$ to be the minimum such value, $t^am$ is a g-corner of $\Ih$.  Therefore some $g \in G$ has $t^am$ as its lead monomial, and $m$ is the lead monomial of $\deh{g}$, so $\ideal{\LT \deh{G}} = \LT I$.
\end{proof}

Therefore we can find the g-corners of $\Ih$ by calculating $\dualh^{(d)}[\Ih]$ for successive $d$, and using the stopping criterion for homogeneous ideals, and from this we can recover the g-corners of $I$, which determines the Hilbert function $H_I$.

\begin{example}
\label{mainEx1}
	Let $I$ be the ideal
		\[ I = \ideal{x^2-xy^3, x^4} \subset \B C[x,y]_{\langle x,y \rangle}. \]
	All terms of the generators have degree 4 or less and the Hilbert function $H_I(d) = 1$ for $d < 10$.  Finding the truncated dual of $I$ at several degrees, one might be tempted to conclude that the Hilbert function stabilizes at 1.  However, at $d=10$ there is a new g-corner, and $H_I(d) = 1$ for all $d \geq 10$.  A reduced standard basis of $I$ is $\{x^2-xy^3, xy^9\}$.

	We look instead at the ideal
		\[ \Ih = \ideal{t^2x^2-xy^3, x^4} \subset \B C[t,x,y]_{\langle t,x,y \rangle} \]
	which has reduced standard basis $\{t^2x^2-xy^3, x^4, x^3y^3, x^2y^6, xy^9\}$.  The g-corners of $\Ih$ occur at closer intervals in degree.  Beyond the highest degree of the generators of $\Ih$, no g-corner has degree more than twice that of the smaller degree g-corners.
\end{example}

\section{Eschewing Homogenization}\label{dehomog}

Although the method described in the previous section works, we would like to discover the g-corners of $I$ without explicitly homogenizing the ideal, since this may introduce unnecessary numerical error to the process.  Additionally, introducing an extra variable causes a significant increase in the computation time of the dual space algorithms, which we would like to avoid.  To get around homogenization we can take advantage of the particular structure of $\Ih$.

\begin{definition}
	For $f \in \ring$, the \'ecart of $f$ is the difference in degree between the highest degree term and the lowest degree term of $f$.
\end{definition}

\begin{theorem}
\label{maxEcart}
	Let $e$ be the maximum \'ecart of the generators $f_1,\ldots,f_s$ of $I$, and let $q \in \dualh$ be homogeneous with lead term having $\partial_t$-degree at least $e$.  Then $q \in \dualh[\Ih]$ if and only if $\deh{q} \in \dual[I]$.
\end{theorem}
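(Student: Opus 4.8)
The plan is to push both implications through the dehomogenization map $\deh{\cdot}$ using a single elementary identity between the pairing on $\dualh$ and the pairing on $\dual$. Set $D = \deg q$. Since $q$ is homogeneous, each of its monomials is $\partial_t^a\partial^\alpha$ with $a+|\alpha| = D$, so the coefficient of $\partial^\alpha$ in $\deh{q}$ is exactly the coefficient of $\partial_t^{D-|\alpha|}\partial^\alpha$ in $q$; a one-line computation then gives the identity I would record first: for every homogeneous $g \in \ringh$, $q(g) = \deh{q}(\deh{g})$ when $\deg g = D$, and $q(g) = 0$ when $\deg g \neq D$ (in that case no monomial of $q$ meets a monomial of $g$, for degree reasons). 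I would also note the bookkeeping facts used throughout: $\deh{\cdot}$ is multiplicative with $\deh{f_i^h} = f_i$, so $\deh{t^bx^\alpha f_i^h} = x^\alpha f_i$; the $t^bx^\alpha f_i^h$ span $\Ih$ and the $x^\alpha f_i$ span $I$; and since $q$ and $\deh q$ are polynomial functionals, $q \in \dualh[\Ih]$ iff $q(t^bx^\alpha f_i^h) = 0$ for all $b,\alpha,i$, while $\deh q \in \dual[I]$ iff $\deh q(x^\alpha f_i) = 0$ for all $\alpha,i$.

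Granting this, ``$\deh{q} \in \dual[I] \Rightarrow q \in \dualh[\Ih]$'' is immediate and, notably, uses nothing about the \'ecart: fixing $b,\alpha,i$, the element $g = t^bx^\alpha f_i^h \in \Ih$ is homogeneous, and if $\deg g = D$ the identity gives $q(g) = \deh{q}(x^\alpha f_i) = 0$, while if $\deg g \neq D$ it gives $q(g) = 0$ outright; so $q$ kills the spanning set.

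For the converse ``$q \in \dualh[\Ih] \Rightarrow \deh{q} \in \dual[I]$'' the \'ecart enters through a degree estimate. First I would bound the degrees appearing in $\deh q$: writing $a_0$ for the $\partial_t$-degree of $\LTg q$, the hypothesis gives $a_0 \ge e$; by the fact $\LTg\deh{q} = \dehw{\LTg q}$ the lead term of $\deh q$ has degree $D - a_0$, and since $\succ$ on $\dual$ is graded this is the top degree of $\deh q$, so every monomial of $\deh q$ has degree $\le D - a_0 \le D - e$. Now fix $\alpha,i$, let $\delta_i = \deg f_i$, and let $e_i \le e$ be the \'ecart of $f_i$, so the lowest-degree monomial appearing in $x^\alpha f_i$ has degree $|\alpha| + \delta_i - e_i \ge |\alpha| + \delta_i - e$. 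If $|\alpha| + \delta_i \le D$, set $b = D - |\alpha| - \delta_i \ge 0$: then $t^bx^\alpha f_i^h$ is homogeneous of degree $D$ with $\deh{\cdot}$-image $x^\alpha f_i$, hence $\deh{q}(x^\alpha f_i) = q(t^bx^\alpha f_i^h) = 0$. If instead $|\alpha| + \delta_i \ge D+1$, then every monomial of $x^\alpha f_i$ has degree $\ge D+1-e > D-e$, strictly above every monomial of $\deh q$, so $\deh{q}(x^\alpha f_i) = 0$ for lack of a common monomial. Either way $\deh q(x^\alpha f_i) = 0$; as $\alpha,i$ were arbitrary, $\deh{q} \in \dual[I]$.

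The single load-bearing step is this last degree estimate: the role of the \'ecart hypothesis is precisely to keep $\deh q$ low enough in degree that the ``overhanging'' multiples $x^\alpha f_i$ with $|\alpha| + \delta_i > D$ fall outside its support, so that one never needs to control $q$ on $\ringh$-multiples of the $f_i^h$ of degree exceeding $D$; without it the forward implication genuinely fails (for $I = \ideal{x - y^3}$, where $e = 2$, one has $\partial_x \in \dualh[\Ih]$ but $\deh{\partial_x} = \partial_x \notin \dual[I]$). Everything else is routine bookkeeping around the pairing identity and the multiplicativity of $\deh{\cdot}$, the only delicate point being the legitimacy of testing membership on the spanning sets $\{t^bx^\alpha f_i^h\}$ and $\{x^\alpha f_i\}$, which holds because $q$ and $\deh{q}$ have finite support.
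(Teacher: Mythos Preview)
Your argument is correct, and it is essentially the paper's \emph{alternate} proof of this theorem (given later, after the Sylvester-dual identification $\sdual^{(d)}[F] = \dehw{\dualh^d[\Ih]}$): both handle the backward direction via the pairing identity $q(g) = \deh{q}(\deh{g})$ on matching degrees, and the forward direction via the degree estimate that any multiple $x^\alpha f_i$ whose top degree exceeds $D$ has \emph{lowest} degree exceeding $D-e$ and is therefore orthogonal to $\deh{q}$. The paper's \emph{primary} proof, however, takes a genuinely different route for the forward implication: it proceeds by induction on $\deg q$, invoking Mourrain's criterion to reduce the claim $\deh{q}\in\dual[I]$ to the two conditions $\D_i\deh{q}\in\dual[I]$ for each $i$ and $\deh{q}(f_j)=0$ for each generator, and then verifies that each $\D_i q$ again has lead term of $\partial_t$-degree at least $e$ so the inductive hypothesis applies. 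Your direct degree count avoids both the induction and the appeal to Mourrain's criterion, at no real cost; the inductive proof, by contrast, ties the statement to the derivative structure that the Mourrain algorithm exploits elsewhere in the paper. One small quibble: testing $q$ and $\deh{q}$ only on the spanning sets is legitimate simply because they are linear functionals, so the finite-support remark, while true, is not what is doing the work there.
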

\begin{proof}
	Suppose $\deh{q} \in \dual[I]$.  For any $f \in I$, $q(t^b f^h) = \deh{q}(f) = 0$ if $q$ and $t^b f^h$ have the same total degree, otherwise $q(t^b f^h) = 0$ since they have no compatible monomials.  Note that any element of $\Ih$ can be expressed as the sum of homogeneous polynomials of the form $t^b f^h$ with $f \in I$, and $q$ annihilates all such polynomials, so $q \in \dualh[\Ih]$.

	To prove the other direction, we use induction on the total degree $d$ of $q$.  For any $d < e$ we have that $\deh{q} \in \dual[I]$ implies $q \in \dualh[\Ih]$ vacuously since there are no functionals $q$ with total degree $d$ and lead term having $\partial_t$-degree at least $e$.  Suppose for some $d$ that for all homogeneous $q \in \dualh[\Ih]$ with degree at most $d$ and lead term with $\partial_t$-degree at least $e$ that $\deh{q} \in \dual[I]$.  Fix some $q \in \dualh[\Ih]$ with degree $d+1$ and lead term with $\partial_t$-degree at least $e$.  To show that $\deh{q} \in \dual[I]$ it is sufficient to show i) that the first derivative $d_i\deh{q}$ is in $\dual[I]$ for each dual variable $\partial_i$ and ii) that $\deh{q}(f_j) = 0$ for all generators $f_j$.

	i) It is easy to check that differentiation with respect to any $\partial_i$ commutes with dehomogenization: $d_i\deh{q} = \dehw{d_i q}$.  The functional $d_i q$ has total degree $d$ and every term of $q$ has $\partial_t$-degree at least as large as the $\partial_t$-degree of the lead term, which is $\geq e$, so the lead term of $d_i q$ has $\partial_t$-degree at least $e$, or $d_i q = 0$.  Therefore $d_i q \in \dualh[\Ih]$ so $d_i(\deh{q}) \in \dual[I]$.
	
	ii) $\deh{q}(f_j) = \partial_t^a q(t^b f_j^h)$ for any values of $a$ and $b$ for which the total degrees of $\partial_t^a q$ and $t^b f_j^h$ are equal.  If $a > b$ then $\partial_t^a q(t^b f_j^h) = 0$ since every term of $\partial_t^a q$ has $\partial_t$-degree at least $a + e$ and every term of $t^b f_j^h$ has $t$-degree at most $b + e$.  If $a\leq b$ then $\deh{q}(f_j) = q(t^{b-a} f_j^h) = 0$ since $q \in \dualh[\Ih]$ and $t^{b-a} f_j^h \in \Ih$.
\end{proof}

\begin{corollary}
	Let $e$ be the maximum \'ecart of the generators of $I$.  Every g-corner of $\Ih$ has $t$-degree $\leq e$.
\end{corollary}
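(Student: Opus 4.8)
The plan is to reduce everything to the following ``$t$-periodicity'' property of the staircase of $\Ih$: for every integer $a \geq e$ and every monomial $x^\alpha$ in $x_1,\ldots,x_n$,
\[
 t^{a} x^\alpha \in \LT \Ih \quad\Longleftrightarrow\quad x^\alpha \in \LT I .
\]
Once this is established the corollary is immediate: if $t^{a}x^\alpha$ is a g-corner of $\Ih$ with $a > e$, then $a \geq 1$ and $a - 1 \geq e$, and since $t^{a}x^\alpha \in \LT \Ih$ with $a \geq e$ the equivalence gives $x^\alpha \in \LT I$, whence applying it again with $a-1 \geq e$ gives $t^{a-1}x^\alpha \in \LT \Ih$; but $t^{a-1}x^\alpha$ properly divides $t^{a}x^\alpha$, contradicting minimality of $t^{a}x^\alpha$ among the generators of $\LT\Ih$, so $a \leq e$. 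To prove the equivalence I would pass both sides through Theorem~\ref{compThm} (once in $\ring$, once in $\ringh$) to turn membership in $\LT$ into the presence of a leading term in a dual space, and then move between $\dual[I]$ and $\dualh[\Ih]$ via Theorem~\ref{maxEcart}.

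For the implication $x^\alpha \notin \LT I \Rightarrow t^{a}x^\alpha \notin \LT\Ih$ (with $a \geq e$): by Theorem~\ref{compThm} in $\ring$ there is $p \in \dual[I]$ with $\LTg p = \partial^\alpha$; since $\succ$ respects degree, $\partial^\alpha$ is the top-degree term of $p$, so $\deg p = |\alpha|$. Then $p^h \in \dualh$ is homogeneous of degree $|\alpha|$, and by compatibility of leading terms with dehomogenization $\LTg p^h$ dehomogenizes to $\LTg p = \partial^\alpha$; as $\LTg p^h$ already has degree $|\alpha|$ this forces $\LTg p^h = \partial^\alpha$. Set $q = \partial_t^{a} p^h$. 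Then $q$ is homogeneous, $\LTg q = \partial_t^{a}\,\LTg p^h = \partial_t^{a}\partial^\alpha$ has $\partial_t$-degree $a \geq e$, and $\deh q = \deh{p^h} = p \in \dual[I]$, so Theorem~\ref{maxEcart} yields $q \in \dualh[\Ih]$. Hence $\partial_t^{a}\partial^\alpha \in \LTg\dualh[\Ih]$, which by Theorem~\ref{compThm} in $\ringh$ says exactly $t^{a}x^\alpha \notin \LT\Ih$.

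For the reverse implication $t^{a}x^\alpha \notin \LT\Ih \Rightarrow x^\alpha \notin \LT I$ (again $a \geq e$): by Theorem~\ref{compThm} in $\ringh$, $\partial_t^{a}\partial^\alpha \in \LTg\dualh[\Ih]$, and since $\Ih$ is a homogeneous ideal (being generated by the $f_i^h$) the space $\dualh[\Ih]$ is graded, so we may take a \emph{homogeneous} $q \in \dualh[\Ih]$ with $\LTg q = \partial_t^{a}\partial^\alpha$; its leading term has $\partial_t$-degree $a \geq e$. Theorem~\ref{maxEcart} then gives $\deh q \in \dual[I]$, and as $q$ is homogeneous $\LTg \deh q = \deh{\LTg q} = \partial^\alpha$, so $\partial^\alpha \in \LTg\dual[I]$, i.e.\ $x^\alpha \notin \LT I$ by Theorem~\ref{compThm} in $\ring$. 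This completes the equivalence, and with it the corollary.

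I do not anticipate a genuine obstacle here: Theorem~\ref{maxEcart} is precisely the bridge between $\dual[I]$ and $\dualh[\Ih]$ that the argument needs. The only points demanding a little care are routine bookkeeping --- that $\LTg p^h$ carries no power of $\partial_t$ when $\LTg p$ is the top-degree term of $p$, that multiplication by the monomial $\partial_t^{a}$ commutes with $\LTg$ under the global order $\succ$, that $\deh{\partial_t^a p^h} = \deh{p^h}$, and that one may replace a dual functional by its top-degree homogeneous component because $\Ih$ is homogeneous --- together with checking that the $\partial_t$-degree hypothesis of Theorem~\ref{maxEcart} is met at both invocations, which it is since $a \geq e$ there, while $a - 1 \geq e$ is exactly what makes the final minimality contradiction go through.
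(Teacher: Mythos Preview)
Your proof is correct and follows essentially the same route as the paper's: translate membership in $\LT\Ih$ and $\LT I$ into dual leading-term statements via Theorem~\ref{compThm}, then bridge between $\dualh[\Ih]$ and $\dual[I]$ using Theorem~\ref{maxEcart}, and conclude by minimality of a g-corner. The only structural difference is that the paper obtains the implication $t^{a}x^{\alpha}\in\LT\Ih \Rightarrow x^{\alpha}\in\LT I$ by citing Theorem~\ref{homogCorner} (which gives $\psi(\LT\Ih)=\LT I$ for all $a$, not just $a\geq e$), whereas you re-derive this direction for $a\geq e$ directly from Theorem~\ref{maxEcart}; both then use Theorem~\ref{maxEcart} for the backward direction $x^{\alpha}\in\LT I \Rightarrow t^{b}x^{\alpha}\in\LT\Ih$ for $b\geq e$. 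Your version is slightly longer but a little more self-contained, relying only on Theorems~\ref{compThm} and~\ref{maxEcart}.
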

\begin{proof}
	If $t^ax^\alpha$ is a g-corner of $\Ih$ then $x^\alpha \in \LT I$ by Theorem \ref{homogCorner}, and so $\partial^\alpha \notin \LTg \dual[I]$.  Therefore $\partial_t^b\partial^\alpha \notin \LTg \dualh[\Ih]$ for all $b \geq e$, so $t^bx^\alpha \in \LT \Ih$ for all $b \geq e$.  Since $a$ is the minimum value for which $t^ax^\alpha \in \LT \Ih$, it must be that $a \leq e$.
\end{proof}

At and above $t$-degree $e$, the dual space of $\Ih$ looks just like the dual space of $I$ (after dehomogenizing).  At and below $t$-degree $e$ is where g-corners of $\Ih$ may occur and this information will be used to decide what degree to calculate the dual space up to.  Let $\dualh^d[\Ih]$ denote the subspace of $\dualh[\Ih]$ with degree exactly $d$.

\begin{corollary}
	$\dual^{(d)}[I] \subset \dehw{\dualh^d[\Ih]}$ and the subspace of $\dehw{\dualh^d[\Ih]}$ of elements with lead term of degree $d-e$ or less is equal to $\dual^{(d-e)}[I]$.
\end{corollary}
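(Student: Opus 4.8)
The plan is to prove the two assertions separately, in both cases leaning on three facts already available: $\Ih$ is a \emph{homogeneous} ideal, since it is generated by the homogeneous polynomials $f_j^h$; we have $\deh{\Ih} = I$; and $\LTg \deh{q} = \dehw{\LTg q}$ for homogeneous $q\in\dualh$. Theorem \ref{maxEcart} will do the real work in one spot.

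For the containment $\dual^{(d)}[I]\subset\deh{\dualh^d[\Ih]}$, take $p\in\dual^{(d)}[I]$. Because the order $\succ$ is the reverse of an anti-graded order, every term of $p$ has degree at most $d$, so there is a unique homogeneous $q\in\dualh$ of degree $d$ with $\deh q = p$, namely the functional obtained by multiplying each monomial $\partial^\alpha$ occurring in $p$ by $\partial_t^{\,d-|\alpha|}$. The claim is $q\in\dualh[\Ih]$. Given any $g\in\Ih$, write $g=\sum_m g_m$ for its decomposition into homogeneous components; since $\Ih$ is homogeneous each $g_m\in\Ih$, hence $\deh{g_m}\in\deh{\Ih}=I$. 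A homogeneous functional of degree $d$ annihilates every homogeneous polynomial of degree $\neq d$, and a direct check on monomials (the pairing of $\partial_t^{\,d-|\alpha|}\partial^\alpha$ with $t^{\,d-|\beta|}x^\beta$ equals the pairing of $\partial^\alpha$ with $x^\beta$) shows that $q(g_d)=p(\deh{g_d})$. Therefore $q(g)=q(g_d)=p(\deh{g_d})=0$ because $p$ annihilates $I$, so $q\in\dualh^d[\Ih]$ and $p=\deh q\in\deh{\dualh^d[\Ih]}$.

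For the second statement write $W=\deh{\dualh^d[\Ih]}$ and let $W'$ be the subspace of $W$ of functionals whose lead term has degree at most $d-e$. The inclusion $\dual^{(d-e)}[I]\subseteq W'$ is immediate: such a functional lies in $\dual^{(d)}[I]$ as well, hence in $W$ by the first part, and its lead term already has degree $\leq d-e$. Conversely, let $v\in W'$ and write $v=\deh q$ with $q\in\dualh^d[\Ih]$ homogeneous of degree $d$. Since $\LTg v=\dehw{\LTg q}$, the lead monomial of $q$ equals $\partial_t^{\,d-\deg\LTg v}$ times a monomial in the $\partial_i$, so its $\partial_t$-degree is $d-\deg\LTg v\geq e$. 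Thus $q\in\dualh[\Ih]$ is homogeneous with lead term of $\partial_t$-degree at least $e$, and Theorem \ref{maxEcart} yields $\deh q=v\in\dual[I]$; together with the bound on its lead term this gives $v\in\dual^{(d-e)}[I]$. (If $d<e$ both sides are $\{0\}$.)

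Essentially all of this is bookkeeping with $t$-degrees. The one step that uses genuine input is the backward direction of the second assertion, where Theorem \ref{maxEcart} supplies the nonobvious fact that once the $\partial_t$-degree of the lead term reaches $e$, membership in $\dualh[\Ih]$ is equivalent to membership in $\dual[I]$ after dehomogenization. In the first part the only thing to be careful about is checking that the degree-$d$ homogenization of a functional annihilating $I$ annihilates \emph{all} of $\Ih$, not merely the monomial multiples of the $f_j^h$; passing to homogeneous components of elements of $\Ih$ (which themselves lie in $\Ih$) makes this routine.
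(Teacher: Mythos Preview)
Your proof is correct and follows essentially the same approach as the paper's own proof: homogenize $p$ to degree $d$ and check it annihilates $\Ih$ for the first inclusion, then for the second assertion translate the degree bound on the lead term into a $\partial_t$-degree bound and invoke Theorem~\ref{maxEcart}. Your argument is more explicit than the paper's---you spell out the decomposition of $g\in\Ih$ into homogeneous components, verify the monomial-level pairing identity $q(g_d)=p(\deh{g_d})$, and separately prove the inclusion $\dual^{(d-e)}[I]\subseteq W'$ that the paper leaves implicit---but the underlying ideas and the use of Theorem~\ref{maxEcart} are identical.
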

	
\begin{proof}
	For any $p \in \dual^{(d)}[I]$ let $q$ be the homogenization of $p$ to degree $d$, that is $q = \partial_t^a p^h$ with $a$ chosen so that $q$ has degree $d$.  For any $g \in \Ih$, $q(g) = 0$ trivially if $g$ does not have degree $d$, and otherwise $q(g) = p(\deh{g}) = 0$ since $\deh{g} \in I$.  Therefore $q \in \dualh^d[\Ih]$, which proves the first part of the statement.  An element in $\dehw{\dualh^d[\Ih]}$ with lead term of degree $\leq d-e$ is the dehomogenization of an element $p \in \dualh^d[\Ih]$ with lead term with $\partial_t$-degree $\geq e$, so $\deh{p} \in \dual[I]$. 
\end{proof}

\begin{figure}[ht]
  \centering
  \includegraphics[width=.9\columnwidth]{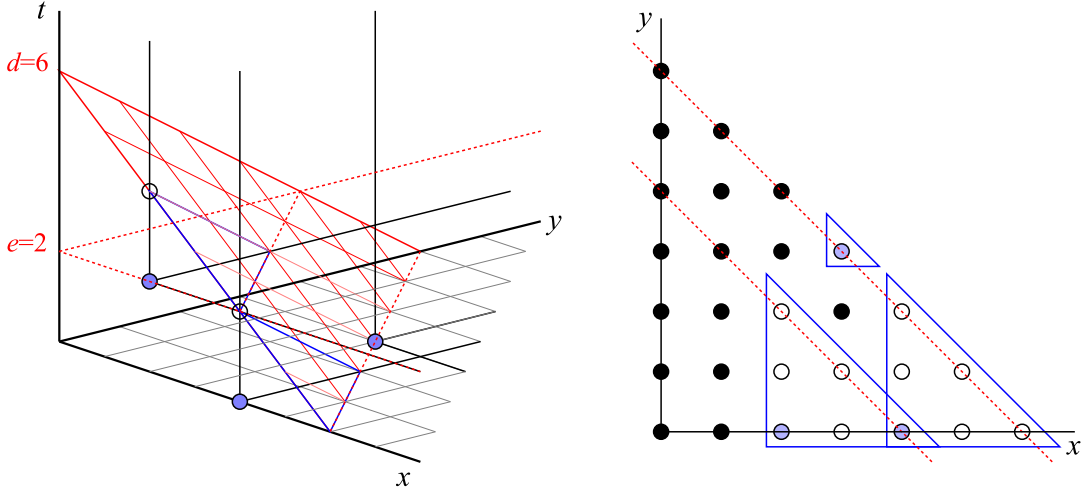}
  \caption{Here $I = \ideal{x-y^3,x^3}$.  On the left is the monomial lattice for $\Ih$, sliced at $d=6$.  All g-corners occur at or below the plane of $t$-degree $2$.  On the right is the slice after dehomogenization where black dots represent lead monomials of $\dehw{\dualh^6[\Ih]}$}
\end{figure}

\section{The Sylvester Dual}\label{sylvester}

\begin{definition}
	The Sylvester array $\BF S(F,d)$ for a set of generators $F$ of ideal $I$, and a degree $d$ is the coefficient matrix of all monomial multiples of the generators in $F$, $x^\alpha f$ such that every term of $x^\alpha f$ has degree $d$ or less.  The columns correspond to each of the monomials up to degree $d$.
\end{definition}

The Sylvester array $\BF S(F,d)$ is similar to the Macaulay array $\BF M(F,d)$ but instead of having a row for every monomial multiple of a generator that has any terms of degree $\leq d$, it only includes the ones that have all terms of degree $\leq d$.  The kernel of $\BF M(F,d)$ corresponded to $\dual^{(d)}[I]$.  The kernel of $\BF S(F,d)$ also defines a subspace of $\dual^{(d)}$, which will be denoted $\sdual^{(d)}[F]$.  Note that unlike the truncated dual space, $\sdual^{(d)}[F]$ depends on the set of generators for $I$.  Also unlike the truncated dual space, it is not generally true that $\sdual^{(d)}[F] \subset \sdual^{(d+1)}[F]$.

\begin{example}
	As in the Macaulay array example (Example \ref{MacExample}) let $F = \{x-y^2, x^2\} \subset k[x,y]$.  Then the Sylvester array $\BF S(F,3)$ is
		\[ \bordermatrix{ & 1 & \partial_x & \partial_y & \partial_x^2 & \partial_y\partial_x & \partial_y^2 & \partial_x^3 & \partial_y\partial_x^2 & \partial_y^2\partial_x & \partial_y^3 \cr
           x-y^3     & 0 & 1 & 0 & 0 & 0 & 0 & 0 & 0 & 0 &-1 \cr
           x^2       & 0 & 0 & 0 & 1 & 0 & 0 & 0 & 0 & 0 & 0 \cr
           x(x^2)    & 0 & 0 & 0 & 0 & 0 & 0 & 1 & 0 & 0 & 0 \cr
           y(x^2)    & 0 & 0 & 0 & 0 & 0 & 0 & 0 & 1 & 0 & 0 }. \]
   The kernel of this matrix, $\sdual^{(3)}[F]$, has dimension 6, with basis $\{1, \partial_y, \partial_y\partial_x, \partial_y^2, \partial_y^2\partial_x, \partial_y^3 + \partial_x\}$.  Note that the rows of $\BF S(F,d)$ are a subset of the rows of $\BF M(F,d)$, so $\dual^{(d)}[I]$ is contained in $\sdual^{(d)}[F]$.
\end{example}

\begin{theorem}
	$\sdual^{(d)}[F] = \dehw{\dualh^d[\Ih]}$.
\end{theorem}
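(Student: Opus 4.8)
The plan is to realise both $\sdual^{(d)}[F]$ and $\deh{\dualh^d[\Ih]}$ as the kernel of one and the same matrix, once the dehomogenisation $\psi$ is recognised as a mere relabelling of coordinates. First I would unwind the right-hand side. Since $\Ih$ is generated by the \emph{homogeneous} polynomials $f_j^h$, its degree-$d$ graded piece is spanned by the products $m\,f_j^h$ with $m$ a monomial in $t,x_1,\dots,x_n$ of degree $d-\deg f_j^h$; and a homogeneous functional $q$ of degree $d$ annihilates $\Ih$ exactly when it annihilates this graded piece, since $q$ pairs to $0$ with any homogeneous element whose degree differs from $d$. Writing $m=t^cx^\alpha$, the condition $\deg m=d-\deg f_j^h$ becomes $c+|\alpha|+\deg f_j^h=d$ with $c\ge 0$, so these products — equivalently, the rows of the ``degree-$d$ Macaulay array of $\Ih$'' whose kernel is $\dualh^d[\Ih]$ — are indexed by the pairs $(j,\alpha)$ with $|\alpha|+\deg f_j^h\le d$, the exponent $c=d-\deg f_j^h-|\alpha|$ being forced.

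On the left-hand side, $\sdual^{(d)}[F]$ is by definition the set of $p\in\dual^{(d)}$ with $p(x^\alpha f_j)=0$ for all pairs $(j,\alpha)$ such that every term of $x^\alpha f_j$ has degree $\le d$, i.e.\ such that $|\alpha|+\deg f_j^h\le d$ (recall $\deg f_j^h$ is the top degree of $f_j$). So $\BF S(F,d)$ and the degree-$d$ Macaulay array of $\Ih$ have the same row index set. For the columns, $\BF S(F,d)$ has one column per monomial $x^\gamma$ of degree $\le d$, while the $\Ih$-array has one per monomial of degree exactly $d$ in $t,x_1,\dots,x_n$, and these are identified by the bijection $x^\gamma\leftrightarrow t^{\,d-|\gamma|}x^\gamma$. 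This bijection is precisely the coordinate form of homogenisation to degree $d$: it carries $p=\sum_\gamma\lambda_\gamma\partial^\gamma\in\dual^{(d)}$ to the homogeneous $q=\sum_\gamma\lambda_\gamma\,\partial_t^{\,d-|\gamma|}\partial^\gamma\in\dualh^d$, with $\deh{q}=p$; and $\psi$ restricted to $\dualh^d$ is a linear isomorphism onto $\dual^{(d)}$ whose inverse is exactly this homogenisation.

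The crux is then to check that, under these identifications of rows and columns, $\BF S(F,d)$ and the $\Ih$-array are literally the same matrix. Writing $f_j=\sum_\delta c_\delta x^\delta$, so $f_j^h=\sum_\delta c_\delta\,t^{\,\deg f_j^h-|\delta|}x^\delta$, a one-line computation shows the coefficient of $t^{\,d-|\gamma|}x^\gamma$ in $t^{\,d-\deg f_j^h-|\alpha|}x^\alpha f_j^h$ equals $c_{\gamma-\alpha}$ (taken to be $0$ unless $\gamma\ge\alpha$ componentwise), which is exactly the coefficient of $x^\gamma$ in $x^\alpha f_j$. Once the matrices coincide, their kernels correspond under the column relabelling: $p\in\sdual^{(d)}[F]$ iff its coordinate vector lies in $\ker\BF S(F,d)$ iff the coordinate vector of the associated $q\in\dualh^d$ lies in the kernel of the $\Ih$-array iff $q\in\dualh^d[\Ih]$ iff $p=\deh{q}\in\deh{\dualh^d[\Ih]}$, which is the claimed equality.

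The one place needing care is the bookkeeping of the previous two paragraphs: correctly lining up the two row index sets (the \'ecart of each generator governs which powers of $t$ can appear, and one must verify that the degree bound $|\alpha|+\deg f_j^h\le d$ really matches on both sides) together with the coefficient identity; all of this is routine but must be done precisely. An equivalent matrix-free route is also available: show directly that if $q$ is the degree-$d$ homogenisation of $p\in\dual^{(d)}$, then $q(t^cx^\alpha f_j^h)=p(x^\alpha f_j)$ whenever $c+|\alpha|+\deg f_j^h=d$, and then compare the defining conditions of $\sdual^{(d)}[F]$ and $\deh{\dualh^d[\Ih]}$ directly — the computation involved is the same.
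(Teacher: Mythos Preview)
Your proposal is correct and follows essentially the same approach as the paper: both identify $\dualh^d[\Ih]$ as the annihilator of the degree-$d$ piece of $\Ih$, note that this piece is spanned by $t^a x^\alpha f_j^h$ with $a+|\alpha|+\deg f_j^h=d$, and then observe that dehomogenisation matches this annihilation condition with the defining condition of $\sdual^{(d)}[F]$. Your version is more explicit about the column bijection $x^\gamma\leftrightarrow t^{\,d-|\gamma|}x^\gamma$ making $\psi:\dualh^d\to\dual^{(d)}$ an isomorphism and about the entrywise equality of the two matrices, whereas the paper compresses this into the single phrase ``dehomogenizing everything''; but the argument is the same.
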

\begin{proof}
	$\dualh^d[\Ih]$ is exactly the set of degree-$d$ homogeneous functionals in $\dualh$ that annihilate the degree-$d$ homogeneous polynomials of $\Ih$.  The space of all degree-$d$ homogeneous elements of $\Ih$ is spanned by the elements of the form $t^a x^\alpha f^h$ where $f \in F$ is part of the generating set of $I$ and $a + |\alpha| + \deg f^h = d$.  Note that $\deg f^h$ is equal to the maximum degree of any term in $f$.  Dehomogenizing everything, $\dehw{\dualh^d[\Ih]}$ is the set of functionals in $\dual$ that annihilate all polynomials of the form $x^\alpha f$ such that all terms have degree $\leq d$.  These polynomials are exactly the ones which form the rows of $\BF S(F,d)$.
\end{proof}

This relationship provides an alternate way to prove Theorem \ref{maxEcart}.
\begin{proof}[(Alternate proof of Theorem \ref{maxEcart})]
	Let $q \in \dualh$ be homogeneous with total degree $d$ and lead term with $\partial_t$-degree $\geq e$.  This implies that $\psi(q)$ has all terms of degree $d-e$ or less.  Suppose $q \in \dualh[\Ih]$, so then $\deh{q} \in \sdual^{(d)}[F]$ by the previous theorem.  This means $\deh{q}$ annihilates each $x^\alpha f$ with all terms of degree $d$ or less and $f \in F$.  For $x^\beta f$ with some term of degree $> d$, the degree of the lead term of $x^\beta f$ must be $> d-e$ because the \'ecart of $f$ is at most $e$.  Therefore $\deh{q}$ also annihilates $x^\beta f$, since they have no terms in common.  $\deh{q}$ annihilates all terms of the form $x^\alpha f$ for any $x^\alpha$ and any $f \in F$, so $\deh{q} \in \dual[I]$.
	
	Suppose $\deh{q} \in \dual[I]$.  Then $\deh{q}$ annihilates all $x^\alpha f$ for $f \in F$, which implies $\deh{q} \in \sdual^{(d)}[I]$, so $q \in \dualh[\Ih]$.
\end{proof}

$\sdual^{(d)}[F]$ can be calculated at each degree $d$ without homogenizing, and captures all the information of the homogenized dual space.  In the lattice of monomials, $\LTg \dualh^d[\Ih]$ can be considered as a slice of $\LTg \dualh[\Ih]$ at degree $d$.  For each g-corner $m$ of $\Ih$, the monomial multiples of $m$ will be missing from $\LTg \dualh^d[\Ih]$.  This slice of the cone generated by $m$ will appear as a truncated cone of missing monomials in $\sdual^{(d)}[F]$, starting at the monomial $\deh{m}$ and extending out to all multiples of $\deh{m}$ up to degree $d - a$ where $a$ is the $t$-degree of $m$.  The monomials missing from $\sdual^{(d)}[F]$ are the union of all the truncated cones generated by the all the g-corners of $\Ih$ up to degree $d$.  

In $\dual^{(d)}[I]$ there is also a cone of missing monomials at $\deh{m}$ for each g-corner $m$ of $\Ih$, but in this case the cone extends all the way to degree $d$.

Suppose $\{g_1,\ldots,g_r\}$ is the standard basis of $\Ih$.  The subspace of $\Ih$ with degree $d$ is spanned by polynomials of the form $m g_i$ where $m$ is any monomial with degree $d - \deg g_i$.  For any particular $g_i$, the possible values of $\deh{m}$ are all the monomials in $\ring$ up to degree $d - \deg g_i$.  In the lattice of monomials of $\ring$, the possible values of $\LT(\dehw{mg_i})$ form a truncated cone, starting at $\LT(\deh{g_i})$ and extending out to all multiples up to degree $d - (\deg \LT(g_i) - \deg \LT(\deh{g_i}))$.  The value of $\deg \LT(g_i) - \deg \LT(\deh{g_i})$ is the $t$-degree of $\LT(g_i)$ and it is at most $e$.  The monomials excluded from $\init(\dehw{\dualh^d[\Ih]})$ are the union of the truncated cone from each $\LT(\deh{g_i})$.  In contrast the monomials excluded from $\init(\dual^{(d)}[I])$ are \emph{all} multiples of $\LT(\deh{g_i})$, so this picture is similar but the excluded cones extend all the way to degree $d$.

\begin{figure}[ht]\label{sylFig}
  \centering
  \includegraphics[width=.85\columnwidth]{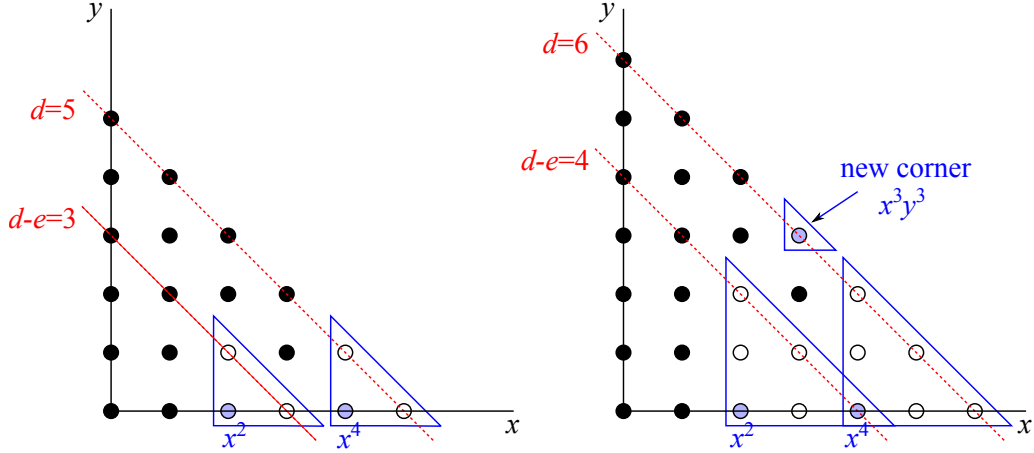}
  \caption{Black dots represent lead monomials in $\sdual^{(d)}[F]$ for $d = 5$ and $d = 6$ with $F = \{x^2-xy^3,x^4\}$.  The missing monomials form truncated cones (blue triangles).  At $d=6$ a new g-corner is discovered at $x^3y^3$.}
\end{figure}

If all g-corners of $\Ih$ are known up to degree $d-1$, then it can be calculated exactly which monomials should be missing in $\LTg \sdual^{(d)}[F]$ if there are no additional g-corners at degree $d$.  By calculating a basis for $\sdual^{(d)}[F]$, whichever additional monomials are missing must be new g-corners of $\Ih$ with degree $d$.  Therefore by calculating $\LTg \sdual^{(d)}[F]$ for each successive $d$ it is possible to discover the g-corners of $\Ih$ at each degree.  We can determine when all the g-corners of $\Ih$ have been found using Proposition \ref{stopCrit}.  The g-corners of $\Ih$ determine the g-corners of $I$, which fully determines the Hilbert function $H_I$.  The process of finding a basis for $\sdual^{(d)}[F]$ also produces the truncated dual space of $I$, since at each $d$ the set the elements of $\sdual^{(d)}[F]$ with lead term at most $d-e$ is $\dual^{(d-e)}[I]$.

\begin{algorithm}\label{sylAlg}
\quad \\
Inputs: generators $F = \{f_1,\ldots,f_s\}$ of ideal $I$.\\
Outputs: monomials $C = \{c_1,\ldots,c_r\}$ corresponding to g-corners of $\Ih$.
\begin{algorithmic}
\STATE $B := \{\};$ \quad List of pairs of a g-corner and a degree.
\STATE $d := 0;$
\STATE $d_{max} := 2\max_i\{\deg \LT f_i\};$
\WHILE{$d \leq d_{max}$}
	\STATE build Sylvester array $\BF S(F,d)$;
	\STATE $Sdual :=$ basis for $\ker \BF S(F,d)$;
	\STATE reduce $Sdual$ so each element has unique lead term;
	\STATE $B_d := \{\};$
	\FOR{monomials $m\notin \LT Sdual$ with $\deg m \leq d$}
		\IF{for all $(c_i,d_i) \in B$, either $c_i\nmid m$ or $\deg(m/c_i) > d-d_i$}
			\STATE append $(m,d)$ to $B_d$;
		\ENDIF
	\ENDFOR
	\IF{$B_d \neq \{\}$ \AND $d_{max} < 2d$}
		\STATE $d_{max} \gets 2d;$
	\ENDIF
	\STATE append $B_d$ to $B$;
	\STATE $d \gets d+1;$
\ENDWHILE
\STATE $C:=$ list of monomials from pairs in $B$;
\RETURN $C;$
\end{algorithmic}
\end{algorithm}

Given the output $C$ of this algorithm, to find the g-corners of $I$, simply remove all monomials $c_i \in C$ that are divisible by some other monomial in $C$.

\begin{remark}
	When dealing with numerically obtained inputs, the algorithm should use singular value decomposition to calculate a basis for the kernel of the Sylvester array.  This method is numerically stable, while using Gaussian elimination is not.
\end{remark}

$\sdual^{(d)}[F]$ reveals not only the g-corners of $\Ih$ at degree $d$ (and therefore the g-corners of $I$), but it can also be used to find the corresponding standard basis elements of $I$.  Additionally $\sdual^{(d)}[F]$ can answer the ideal membership test for polynomials with all terms of degree $d$ or less.  These two facts are encompassed by the following proposition.
\begin{proposition}
	If $f \in \ring$ is a polynomial with no terms exceeding degree $d$ and $p(f) = 0$ for all $p \in \sdual^{(d)}[F]$, then $f \in I$.  For each monomial $\partial^\alpha$ of degree $\leq d$ that is not in $\LTg \sdual^{(d)}[F]$, there is some polynomial $f \in \ring$ satisfying the above conditions with $\LT f = x^\alpha$.
\end{proposition}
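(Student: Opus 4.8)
The plan is to recast both claims as facts about the Sylvester array and its kernel, using only linear algebra together with the fact that the orders $>$ on $\ring$ and $\succ$ on $\dual$ are reverses of one another. Let $P_d\subseteq\ring$ be the span of the monomials of degree $\leq d$, and let $W\subseteq P_d$ be the span of the rows of $\BF S(F,d)$, i.e. $W=\operatorname{span}\{x^\alpha f : f\in F,\ \text{every term of } x^\alpha f \text{ has degree}\leq d\}$. Each such $x^\alpha f$ lies in $I$, so $W\subseteq I$. The functionals of degree $\leq d$ form a space $\dual^{(d)}$ dual to $P_d$ under the perfect pairing $\partial^\beta(x^\gamma)=\delta_{\beta\gamma}$ (both of dimension $N=\binom{n+d}{n}$), and the definition of the Sylvester array says exactly that $\sdual^{(d)}[F]=\ker\BF S(F,d)$ is the annihilator $W^\perp$ of $W$ inside $\dual^{(d)}$.

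The first assertion is then immediate: if $f\in P_d$ and $p(f)=0$ for all $p\in\sdual^{(d)}[F]=W^\perp$, then $f\in(W^\perp)^\perp$, and since the pairing is perfect and the spaces are finite dimensional, $(W^\perp)^\perp=W\subseteq I$.

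For the second assertion the plan is to prove a ``staircase complementarity'' lemma: for every monomial $x^\alpha$ of degree $\leq d$, exactly one of $x^\alpha\in\LT W$ and $\partial^\alpha\in\LTg\sdual^{(d)}[F]$ holds. Granting this, if $\partial^\alpha\notin\LTg\sdual^{(d)}[F]$ then $x^\alpha\in\LT W$, so some $f\in W$ has $\LT f=x^\alpha$; since $f\in W\subseteq I\cap P_d$ and $p(f)=0$ for all $p\in W^\perp$, this $f$ meets every requirement. To prove the lemma I would list the degree-$\leq d$ monomials as $m_1>\cdots>m_N$ and write $\partial^{(i)}$ for the functional dual to $m_i$; because $\succ$ reverses $>$, we get $\partial^{(N)}\succ\cdots\succ\partial^{(1)}$. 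Setting $U_i=\operatorname{span}(m_{i+1},\dots,m_N)$, so that $U_i^\perp=\operatorname{span}(\partial^{(1)},\dots,\partial^{(i)})$, one checks directly that $m_i\in\LT W$ iff $m_i\in W+U_i$, while $\partial^{(i)}\in\LTg W^\perp$ iff $W^\perp\cap U_i^\perp\supsetneq W^\perp\cap U_{i-1}^\perp$; rewriting the latter via $W^\perp\cap U_i^\perp=(W+U_i)^\perp$ and $U_{i-1}=U_i+k\,m_i$, it becomes $m_i\notin W+U_i$, exactly the negation of the former. (The count is consistent, since $|\LT W|+|\LTg W^\perp|=\dim W+\dim W^\perp=N$.)

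I expect the staircase complementarity lemma to be the only step with real content; it is the finite-dimensional analogue of Theorem \ref{compThm} and is already present, for zero-dimensional ideals, in \cite{LVZ-higher}, so I would either cite it or include the short argument above. The one point needing care is keeping the two reversed orders straight so that ``lead term'' is read correctly on each side. As an alternative to the direct argument one could instead use the identity $\sdual^{(d)}[F]=\dehw{\dualh^d[\Ih]}$ from the previous section: homogenize $f$ to degree $d$, observe that pairing a degree-$d$ homogeneous functional with this homogenization agrees with pairing its dehomogenization with $f$, and then apply the ideal-membership theorem and Theorem \ref{compThm} to the homogeneous ideal $\Ih$; but this route reintroduces the variable $t$ and is no shorter.
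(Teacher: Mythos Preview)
Your argument is correct. Both parts follow cleanly from the finite-dimensional duality between $P_d$ and $\dual^{(d)}$: the double-annihilator identity gives the first claim at once, and your staircase complementarity lemma (that $x^\alpha\in\LT W$ iff $\partial^\alpha\notin\LTg W^\perp$) is exactly what is needed for the second. The verification you sketch, passing through $W^\perp\cap U_i^\perp=(W+U_i)^\perp$ and $U_{i-1}=U_i\oplus k\,m_i$, goes through without difficulty.

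The paper, however, takes precisely the homogenization route you describe at the end and then set aside. It homogenizes $f$ to a degree-$d$ element $g=t^af^h$, observes that $q(g)=0$ for every $q\in\dualh^d[\Ih]$ (using $\sdual^{(d)}[F]=\dehw{\dualh^d[\Ih]}$), and then invokes the ideal-membership theorem for $\Ih$ to conclude $g\in\Ih$, hence $f=\deh{g}\in I$. For the second claim it homogenizes $\partial^\alpha$ to $\partial_t^b\partial^\alpha$, applies Theorem~\ref{compThm} to $\Ih$ to get a homogeneous $g\in\Ih$ with $\LT g=t^bx^\alpha$, and dehomogenizes. So the paper's proof is short because it cashes in results already proved, at the cost of passing through the extra variable; your proof is self-contained and stays entirely within $P_d$, at the cost of reproving the lead-term complementarity in this finite setting. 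Your remark that the homogenization route ``is no shorter'' undersells it a bit: once Theorems~\ref{compThm} and the membership theorem are in hand, it is a few lines.
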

\begin{proof}
	Suppose $f \in \ring$ is a polynomial with no terms exceeding degree $d$ and $p(f) = 0$ for all $p \in \sdual^{(d)}[F]$.  Let $g$ be the homogenization of $f$ to degree $d$ (i.e. $g = t^af^h$ where $a = d - \deg f^h$).  Then $q(g) = 0$ for all $q \in \dualh^d[\Ih]$ so $g \in \Ih$.  Therefore $f = \deh{g} \in I$.
	
	If $\partial^\alpha$ with degree $\leq d$ is not in $\LTg \sdual^{(d)}[F]$, then its homogenization $\partial_t^b\partial^\alpha$ to degree $d$ is not in $\LTg \dualh[\Ih]$ so there is some homogeneous $g \in \Ih$ with $\LT g = t^bx^\alpha$.  Therefore $\deh{g}$ has lead term $x^\alpha$ and is annihilated by $\sdual^{(d)}[F]$.
\end{proof}

Supposing a basis for $\sdual^{(d)}[F]$ has been calculated, build the coefficient matrix of these basis elements with columns for each of the monomials up to degree $d$.  The kernel of this matrix corresponds to the polynomials in $\ring$ with all terms of degree $\leq d$ that are annihilated by $\sdual^{(d)}[F]$.  Let $m_1,\ldots,m_r$ be the set of g-corners of $\Ih$.  If monomial $m_i \in \dualh$ has degree $d$, there must be some polynomial $h_i$ in this kernel with $\LT g_i = \deh{h_i}$.  Collecting the polynomials found this way for each g-corner of $\Ih$ produces a set $H = \{h_1,\ldots,h_r\} \subset I$, with $\LT H = \LT I$ so $H$ is a standard basis of $I$.

\begin{algorithm}
\quad \\
Inputs: Basis for the Sylvester dual $Sdual$ at some degree $d$, and a g-corner $c$ of $I$ found at degree $d$.\\
Outputs: Polynomial $p \in I$ with $\LT p = c$.
\begin{algorithmic}
\STATE $monomials :=$ list of monomials $m \in \ring$ with $\deg m \leq d$ and $m < c$;
\STATE $M :=$ coefficient matrix of elements in $Sdual$, with columns only for the monomials in $monomials$;
\STATE $K := \ker M$;
\RETURN an element $p$ of $K$ with $\LT p = c$;  \quad Such an element is guaranteed to exist.
\end{algorithmic}
\end{algorithm}

\begin{example}
We continue Example \ref{mainEx1} with $F = \{x^2-xy^3, x^4\}$ and $I = \ideal{F}$, and run through the algorithm for finding the g-corners of $I$ and a standard basis, this time using Sylvester arrays instead of homogenizing the generators.  For $d = 0,1,2,3$ the Sylvester arrays are empty because there are no multiples of $x^2-xy^3$ or $x^4$ which have all terms of degree 3 or less.  Therefore bases for the first four Sylvester dual spaces are
	\[ \begin{array}{rl}
		\sdual^{(0)}[F]: & \{1\},\\
		\sdual^{(1)}[F]: & \{1, \partial_y, \partial_x\},\\
		\sdual^{(2)}[F]: & \{1, \partial_y, \partial_x, \partial_y^2, \partial_y\partial_x, \partial_x^2\},\\
		\sdual^{(3)}[F]: & \{1, \partial_y, \partial_x, \partial_y^2, \partial_y\partial_x, \partial_x^2, \partial_y^3, \partial_y^2\partial_x, \partial_y\partial_x^2, \partial_x^3\}.
	\end{array} \]

At degree 4, the Sylvester array $\BF S(F,4)$ has rows for $x^2-xy^3$ and $x^4$.  The kernel of this matrix $\sdual^{(4)}[F]$ has basis
	\[ \begin{array}{rr}
		\sdual^{(4)}[F]: & \{1, \partial_x, \partial_y, \partial_y\partial_x, \partial_y^2, \partial_x^3, \partial_y\partial_x^2, \partial_y^2\partial_x, \partial_y^3,\\
		& \partial_y\partial_x^3, \partial_y^2\partial_x^2, \partial_y^3\partial_x+\partial_x^2, \partial_y^4\}.
	\end{array} \]
The monomials $\partial_x^2$ and $\partial_x^4$ are both missing from the set of lead monomials of these basis elements.  Since there were no previous g-corners found, each of $x^2$ and $x^4$ must be the dehomogenization of a g-corner of $\Ih$.  They are recorded as potential g-corners of $I$ along with the degree they were found at, which is 4.  To find standard basis elements corresponding to these g-corners, we construct the coefficient matrix of the basis for $\sdual^{(4)}[F]$, and try to find elements of the kernel with lead terms $x^2$ and $x^4$.  The polynomials $x^4$ and $x^2 - xy^3$ may be produced this way.

At degree 5, $\BF S(F,5)$ has rows for $x^2-xy^3$, $x^4$, $x^3-x^2y^3$, $x^5$, $x^2y-xy^4$, $x^4y$.  A basis for the Sylvester dual is 
	\[ \begin{array}{rr}
		\sdual^{(5)}[F]: & \{1, \partial_x, \partial_y, \partial_y\partial_x, \partial_y^2, \partial_y^2\partial_x, \partial_y^3, \partial_y\partial_x^3, \partial_y^2\partial_x^2,\\
		& \partial_y^3\partial_x+\partial_x^2, \partial_y^4, \partial_y^2\partial_x^3, \partial_y^3\partial_x^2, \partial_y^4\partial_x+\partial_y\partial_x^2, \partial_y^5\}.
	\end{array} \]
Since $x^2$ and $x^4$ were g-corners found at degree 4, the multiples up to 1 degree higher will be missing at degree 5.  This accounts for all the monomials missing from this basis for $\sdual^{(5)}[F]$, which are $\partial_x^2$, $\partial_x^3$, $\partial_y\partial_x^2$, $\partial_x^4$, $\partial_x^5$ and $\partial_y\partial_x^4$, so there are no new g-corners here.

At degree 6, a basis for the Sylvester dual is
	\[ \begin{array}{rr}
		\sdual^{(6)}[F]: & \{1, \partial_x, \partial_y, \partial_y\partial_x, \partial_y^2, \partial_y^2\partial_x, \partial_y^3, \partial_y^3\partial_x+\partial_x^2,\\
		& \partial_y^4, \partial_y^2\partial_x^3, \partial_y^3\partial_x^2+\partial_x^3, \partial_y^4\partial_x+\partial_y\partial_x^2,\\
		& \partial_y^5, \partial_y^4\partial_x^2+\partial_y\partial_x^3, \partial_y^5\partial_x+\partial_y^2\partial_x^2, \partial_y^6\}.
		\end{array} \]
The monomials missing from the lead terms are $\partial_x^2$, $\partial_x^3$, $\partial_y\partial_x^2$, $\partial_x^4$, $\partial_y\partial_x^3$, $\partial_y^2\partial_x^2$, $\partial_x^5$, $\partial_y\partial_x^4$, $\partial_x^6$, $\partial_y\partial_x^5$, $\partial_y^2\partial_x^4$ and $\partial_y^3\partial_x^3$.  All but $\partial_y^3\partial_x^3$ corresponds to a g-corner $x^2$ or $x^4$ or a multiple of these by a monomial up to degree 2.  Therefore $x^3y^3$ is a new g-corner, which we store along with the degree 6 at which it was found.  (See Figure 4 for a diagram of this step.)  A corresponding standard basis element is $x^3y^3$.

Continuing this process at each degree, the g-corner $x^2y^6$ is found at $d = 8$ and $xy^9$ is found at $d = 10$.  Corresponding standard basis elements are $x^2y^6$ and $xy^9$.  No additional g-corners are found searching up to degree 20, so all the g-corners of $I$ are among $x^2,x^4,x^3y^3,x^2y^6,xy^9$.  The monomials that are multiples of other monomials in the list can be dropped, leaving $x^2$ and $xy^9$.  The corresponding standard basis elements found for these g-corners were $x^2-xy^3$ and $xy^9$ so this is the reduced standard basis for $I$.  Finally the Hilbert function can easily be recovered from the set of g-corners, which is $H_I(d) = 2$ for $1 \leq d \leq 10$, and $H_I(d) = 1$ for $d = 0$ and $d > 10$.
\end{example}

We have implemented Algorithm \ref{sylAlg} for finding the g-corners of an ideal using the Sylvester dual, and this algorithm for recovering a standard basis of the ideal, in the computer algebra system {\em Macaulay2}.  This implementation is contained in the package "NumericalHilbert," which can be found at 
\\ \url{http://people.math.gatech.edu/~rkrone3/NHcode.html}.

$\sdual^{(d)}[F]$ can also be calculated using a variation of the algorithm presented by Mourrain.  The Mourrain algorithm works by finding the elements of $\dualh$ whose derivatives are in the span of the previously found dual basis elements, and which annihilate the generators $f_1^h,\ldots,f_s^h$.  Here we can take advantage of the fact that there exists a homogeneous basis of the truncated dual space.  Each of the derivatives of a degree $d$ dual element has degree $d-1$, so to find the degree $d$ elements we need only to consider the elements found in the preceding step.  The Macaulay2 package also contains an implementation of this version of the algorithm.  Using the Mourrain algorithm is more efficient than using the Sylvester array strategy in most cases because the matrices involved grow with the dimension of the dual space rather than with the dimension of the entire ring up to the given degree.

The algorithm produces:
\begin{itemize}
 \item a basis for the dual space truncated to the degree that the algorithm needed to calculate up to
 \item the g-corners of the ideal (or optionally a standard basis of the ideal)
 \item a bound on the regularity of the Hilbert function
 \item the values of the Hilbert function up to the regularity bound
 \item the Hilbert polynomial which defines the values above the regularity
\end{itemize}
Note that the exact regularity of the Hilbert function can easily be computed from this data by comparing the returned values of the Hilbert function to the Hilbert polynomial at each degree below the bound.

\begin{example}
In this example we demonstrate the functionality of the Macaulay2 package.  Let $I$ be the ideal defined by the system Cyclic4, given by the following polynomials in $\B C[x_1,x_2,x_3,x_4]$:
\[ F = \{x_1 + x_2 + x_3 + x_4, \; x_1x_2 + x_2x_3 + x_3x_4 + x_4x_1, \]
\[ x_1x_2x_3 + x_1x_2x_4 + x_1x_3x_4 + x_2x_3x_4, \; x_1x_2x_3x_4 - 1\}. \]
The variety consists of two irreducible curves, along with 8 embedded 0-dimensional components.  One of the embedded components is $(-1,1,1,-1)$.  However suppose we were not aware of this, and only had an approximate numerical value for this point.  Below we use an approximation with small error that was obtained using a numerical solver.

\begin{Macaulay2}
\begin{verbatim}
i1 : loadPackage "NumericalHilbert";

i2 : R = CC[x_1..x_4, MonomialOrder=>{Weights=>{-1,-1,-1,-1}},Global=>false];

i3 : F = matrix{{x_1 + x_2 + x_3 + x_4,
                 x_1*x_2 + x_2*x_3 + x_3*x_4 + x_4*x_1,
                 x_2*x_3*x_4 + x_1*x_3*x_4 + x_1*x_2*x_4 + x_1*x_2*x_3,
                 x_1*x_2*x_3*x_4 - 1}};

i4 : P = {-1.0-.53734e-17*ii,  1.0-.20045e-16*ii,
           1.0+.89149e-17*ii, -1.0+.18026e-17*ii};

i5 : dualInfo(F, Point=>P, Tolerance=>1e-4)

o5 = ({1, - 1x  + x , - 1x  + x , ... },
              1    3      2    4
                2
      {x , x , x , x x }, 5, {1, 2, 1, 1, 1}, 1)
        1   2   3   3 4
\end{verbatim}
\end{Macaulay2}
The output at o5 is a sequence consisting of the information listed above the start of the example.  First is a basis for $\dual^{(d)}[I]$ but we do not reproduce the full output here in the paper for reasons of space.  Shown is only $\dual^{(1)}[I]$.  Note that the dual elements are written in terms of the original variables of $\ring$ even though this is an abuse of notation.  Next in the sequence is the list of g-corners generating $\LT I$, $x_1, x_2, x_3^2, x_2x_4$.  The last two entries of the sequence are the first five values of the Hilbert function $(1,2,1,1,1)$ followed by the Hilbert polynomial, which is 1.  We see from the output that the Hilbert function is $H_I(d) = 1$ for all $d$ except $H_I(1) = 2$.  The fact that the Hilbert polynomial is a non-zero constant indicates that indeed the point in question sits on a 1-dimensional component of the variety.  All of these results agree with the values that are obtained by symbolic computation at the point $(-1,1,1,-1)$, but we did not need to know 
the exact value of this point.
\end{example}

\begin{example}
We give another example, this time where the point of interest is not rational so it must be approximated.  Define $I$ to be the ideal generated by $F = \{(x_1^2 + x_2^2 + x_3^2 - 1)(x_1-x_2), (x_1-x_2)^3\} \subset \B C[x_1,x_2,x_3]$.  The variety is the plane $x_1 - x_2 = 0$, but there is an embedded curve which passes through the point $(\sqrt{2},\sqrt{2},0)$.  We will use a numerical approximation of this point to find the Hilbert function of the ideal localized here.
\begin{Macaulay2}
\begin{verbatim}
i1 : loadPackage "NumericalHilbert";

i2 : R = CC[x_1..x_3, MonomialOrder=>{Weights=>{-1,-1,-1}},Global=>false];

i3 : F = matrix{{(x_1^2 + x_2^2 + x_3^2 - 1)*(x_1 - x_2),
                 (x_1 - x_2)^3}};

i4 : P = {0.7071068,  0.7071068, 0};

i5 : dualInfo(F, Point=>P, Tolerance=>1e-4)

o5 = ({1, x , x , 1x , ... },
           1   2    3
        2     2
      {x , x x }, 4, {1, 3, 5, 6}, i + 3)
        1   1 2
\end{verbatim}
\end{Macaulay2}
We again truncate here the list of basis elements of the dual space of the ideal for space reasons.  The algorithm finds that 4 is a bound on the regularity of the Hilbert function, and the values of $H_I(i)$ for $i = 0,1,2,3$ are $1,3,5,6$ respectively.  Beyond that point, $H_I(i) = i + 3$.  The fact that the Hilbert polynomial is linear indicates that the largest component passing through $(\sqrt{2},\sqrt{2},0)$ has dimension 2.
\end{example}

\bibliographystyle{plain}
\bibliography{NumDualAlgs}

\end{document}